\font\cmssl=cmss10 at 12 pt
\newtheorem{thm}{Theorem}
\newtheorem{lem}[thm]{Lemma}
\newtheorem{prop}[thm]{Proposition}
\newtheorem{defn}[thm]{Definition}
\newtheorem{cor}[thm]{Corollary}
\newtheorem{rem}[thm]{Remark}
\newtheorem{notation}[thm]{Notation}
\newtheorem{ass}[thm]{Assumption}
\title {Darboux theorem  for generalized complex structures on transitive Courant algebroids}
\date{\today}
\author{Vicente Cort\'es and  Liana David}
\begin{document}

\maketitle

{\bf Abstract:} 
Under natural assumptions   we find local normal forms for 
generalized complex structures on transitive Courant algebroids, which extend 
Gualtieri's Darboux theorem for generalized complex structures on manifolds \cite{gualtieri-thesis,gualtieri-annals}.   
When the base of the Courant algebroid  is a point,  they  reduce to
Wang's description of 
invariant complex structures on compact semisimple Lie groups \cite{wang}.

\section{Introduction}

Generalized complex geometry is a well-established area of mathematics deeply rooted in differential, complex and symplectic geometry. Its starting point  \cite{H} was to replace the tangent bundle of a manifold  $M$ with the generalized tangent bundle 
  $\mathbb{T}M = TM\oplus T^{*}M$ 
and   to consider   complex and symplectic structures as particular classes  of a more  general 
type of structure,   defined on the generalized tangent bundle  $\mathbb{T}M$
and called \emph{generalized complex structure} on $M$.
Many notions from classical differential geometry were   defined and studied, in a first instance on generalized tangent bundles and then on arbitrary Courant algebroids
(see e.g. \cite{gualtieri-thesis,gualtieri-annals, mario}).

A key result of the theory is Gualtieri's   generalized Darboux theorem \cite{gualtieri-thesis,gualtieri-annals}, which states that any generalized complex structure on a manifold is
locally (modulo diffeomorphisms  and $B$-field transformations),  around regular points,  the direct sum of a complex and a symplectic structure. 
Our aim in this paper is to develop a similar local description of   generalized complex structures on transitive Courant algebroids.

\begin{defn} A  generalized almost complex structure on a Courant algebroid $E$ is a skew-symmetric field of endomorphisms $\mathcal J \in \Gamma ( \mathrm{End}\, E)$ 
which satisfies $\mathcal J^{2} = -\mathrm{Id}.$ 
The generalized  almost complex structure $\mathcal J$ is {\cmssl  integrable} 
or is a {\cmssl generalized complex structure}
 if the  space of sections of its $(1,0)$-bundle $L$  is closed under the Dorfman bracket of $E$.\
\end{defn}

In view of Gualtieri's generalized Darboux theorem, one may ask  if  a generalized complex structure  $\mathcal J$ on a transitive Courant algebroid  $E$ is locally equivalent 
(under suitable regularity assumptions) 
to
 the direct sum of
a complex structure, a symplectic structure and a constant  complex structure on the  fiber type of the quadratic Lie algebra bundle associated to the Courant algebroid.
We give  a positive answer to this question, provided that  a certain Lagrangian subbundle $\mathcal D$ canonically associated to
$\mathcal J$  
has real index $p=0$.  Without the assumption $p=0$, the local form of $\mathcal J$ is in general more involved and we describe it in terms of the $(1,0)$-bundle 
$L$ of $\mathcal J$   (see our main Theorem \ref{main-thm}). 
When  the base of $E$  is a point,  our result  reduces to
Wang's description of (skew-symmetric)  left-invariant complex structures on compact semisimple Lie groups \cite{wang}.\\

{\bf Structure of the paper.} 
In Section \ref{prelim-section} we  fix our general assumptions and we revise various facts we need on the theory of Courant algebroids.  Along the paper we follow the notation and conventions from
\cite{cortes-david-JSG}, where more details on this material can be found.

In Section  \ref{standard-untwisted-section} we  prove that any 
transitive Courant algebroid is locally isomorphic to an untwisted Courant algebroid. In view of 
the classification of regular Courant algebroids  \cite{chen}, 
an untwisted Courant algebroid  over a manifold $M$ is essentially (cf.\ Definition~\ref{untwisted:def} and Notation~\ref{untwisted:notation}) a standard Courant algebroid   for which the  quadratic Lie algebra bundle $\mathcal G$ is the trivial bundle
$M\times \mathfrak{g}$,  the connection 
$\nabla$  on $\mathcal G$   is the canonical connection and the twisting forms  $H\in \Omega^{3}(M)$ and $R\in \Omega^{2} (M, \mathcal G)$ are  trivial
($H=0$, $R=0$).\

In Sections \ref{dirac-section} and  \ref{dirac-complex-section} we describe Dirac structures on direct sums  $V\oplus V^{*} \oplus \mathfrak{g}$  and their complexification,  
where  $(\mathfrak{g}, \langle \cdot , \cdot \rangle_{\mathfrak{g}} )$ is a vector space with scalar product of neutral signature\footnote{(to be specialised later to a Lie algebra with invariant neutral scalar product)},
$V$ is a real vector space  and $V\oplus V^{*}$ is endowed with its natural metric of neutral signature. 
As an application, 
we obtain a description of the $(1,0)$-space of a skew-symmetric complex structure on $V\oplus V^{*} \oplus \mathfrak{g}$, in terms of
a subspace $W$ of $V^{\mathbb{C}}$,  
a maximal isotropic subspace 
$\mathcal D$ of $\mathfrak{g}^{\mathbb{C}}$,  a complex-linear map $\sigma : W \rightarrow \mathfrak{g}^{\mathbb{C}}$  and a $2$-form $\epsilon \in \Lambda^{2}W^{*}.$  The data $(W,\mathcal D , \sigma ,  \epsilon)$ are subject to certain nondegeneracy conditions described in Corollary \ref{index-zero} and Proposition~\ref{basis}.\

In Section \ref{bundle-section}  we consider a generalized  almost complex  structure   $\mathcal J$ defined on a  standard 
Courant algebroid $E = TM\oplus T^{*}M\oplus \mathcal G$ and we describe the $(1,0)$-bundle of  $\mathcal J$ 
in terms of a   subbundle $W\subset ( TM) ^{\mathbb{C}}$, 
a maximal isotropic subbundle $\mathcal D$ of $\mathcal G^{\mathbb{C}}$, 
a linear map $\sigma : W \rightarrow \mathcal G^{\mathbb{C}}$
and a $2$-form $\epsilon \in \Gamma ( \Lambda^{2} W^{*})$ (see Assumption 
\ref{Wsmooth:ass} and Proposition \ref{prop-smooth}).
In terms of these data, $L$ is  given by   
\begin{align}
\nonumber& L =\{ X +\xi + \sigma (X)+ r\mid X\in W,\ \xi\in  ((TM)^{\mathbb{C}})^{*},\ r\in \mathcal D, \\
\nonumber&  \xi (Y) = 2 \epsilon (X, Y) - \langle \sigma (Y) ,  \sigma (X) + 2r\rangle_{\mathfrak{g}},\ \forall Y\in W\} .
\end{align}
In   Proposition~\ref{conditii-integr}  we find  the conditions on 
$(W, \mathcal D , \sigma , \epsilon)$ for  the integrability of $\mathcal J .$   In particular, we obtain that $\mathcal D$ is a Lagrangian subbundle
of $\mathcal G^{\mathbb{C}}.$ 

In Section \ref{classif-local} we  apply the material from the previous sections to prove the main result of this paper (Theorem \ref{main-thm} below). 
We  consider only  transitive Courant algebroids $E$ for which the   fiber type   $\mathfrak{g}$ of the quadratic Lie algebra bundle is a compact semisimple Lie algebra. 
We also assume that the Lagrangian subbundles $\mathcal D$  of the generalized  complex  structures  are mapped to trivial bundles under local isomorphisms 
between $E$ and untwisted Courant algebroids  and that the fibers of
$\mathcal D$ are regular subalgebras of $\mathfrak{g}^{\mathbb{C}}.$
 We call a  generalized complex structure with these properties regular 
 (see Definition~\ref{norm}). For motivations of these assumptions,  see  Remark  \ref{wang-rem}.
 We recall that a subalgebra of a semisimple Lie algebra is called regular if it is normalised by a Cartan subalgebra \cite{Dyn}.
We prove: 

\begin{thm}\label{main-thm} Any regular generalized complex structure 
is locally equivalent, in a neighborhood of a regular point,  to one defined  on an untwisted Courant algebroid $E= TM\oplus T^{*}M\oplus (M\times \mathfrak{g})$,
where $M = U_{1}\times U_{2}\times V$, $U_{1}\subset \mathbb{R}^{p}$ with coordinates $(x^{i})$, $U_{2}\subset \mathbb{R}^{2q}$ with coordinates
$(y^{i})$  and $V\subset \mathbb{C}^{k}$  with coordinates $(z^{i})$ are open neighborhoods 
of the origins, with $(1,0)$-bundle $L = L(W, \mathcal D ,\sigma , \epsilon )$ as follows:  
\begin{enumerate}
\item \begin{equation}
W = \mathrm{span}_{\mathbb{C}}\left\{ \frac{\partial}{\partial x^{i}}, 
(1\leq i\leq p),\ \frac{\partial}{\partial y^{i}} (1\leq i\leq 2q),\  \frac{\partial}{\partial \bar{z}^{j}},\ (1\leq j\leq k)  \right\}  ;
\end{equation}
\item $\mathcal{D}_{x}\subset\mathfrak{g}^{\mathbb{C}}$ is independent of $x\in M$ and  corresponds to  the 
$(1,0)$-space of an invariant complex structure of $G/T$, where $G$ is a  connected Lie group with Lie algebra $\mathfrak{g}$ and $T$ is a torus 
of dimension $p$;\
\item $\sigma = \sum_{i=1}^{p}dx^{i} \otimes (\sqrt{-1}  {v}^{\prime}_{i})$, where $(v^{\prime}_{i})$  are constant and form a basis of an  isotropic subspace 
$\mathfrak{h}_{\mathfrak{g}}^{\mathcal C}
\subset  \mathfrak{h}_{\mathfrak{g}}$ such that  $\langle \cdot , \cdot \rangle_{\mathfrak{g}}\vert_{\mathfrak{h}_{\mathfrak{g}}^{\mathcal D}\times \mathfrak{h}_{\mathfrak{g}}^{\mathcal C}}$
is non-degenerate. Here  $\mathfrak{h}_{\mathfrak{g}}$ is a Cartan subalgebra of $\mathfrak{g}$ which normalises $\mathcal D := \mathcal D_{x}$ and 
$\mathfrak{h}_{\mathfrak{g}}^{\mathcal D} := \mathfrak{h}_{\mathfrak{g}}\cap \mathcal D$;\
\item \begin{equation}
\epsilon =  \sqrt{-1}  ( \sum_{i=1}^{q} dy^{i}\wedge dy^{i+q}). 
\end{equation}
\end{enumerate}
\end{thm}

In Section \ref{dep-param-sect} we prove:

\begin{cor} \label{dep-param-cor}The parameters $(q,k)$ together  with the orbit  $[\mathcal D ]$ of $\mathcal D$ under the action of 
$\mathrm{Aut} (\mathfrak{g})$ represent  a complete set of invariants of  the equivalence classes of regular generalized complex structures. 
The parameter $p=\dim M -2(q+k)$ is redundant and coincides with $\mathrm{dim}_{\mathbb{C}} \, (\mathcal D \cap \bar{\mathcal D}).$  
\end{cor}

As observed in Section \ref{dep-param-sect}, the orbit space of $\mathrm{Aut} (\mathfrak{g})$ on the Grassmannian of Lagrangian subalgebras $\mathcal D$ of $\mathfrak g^\mathbb{C}$ reduces to the 
orbit space of the symmetry group of the Dynkin diagram on the Grassmannian of maximally isotropic subspaces of $\mathfrak h_{\mathfrak g}^\mathbb{C}$.

In  the appendix  we state  auxiliary results   which are used in the main text.\

In our recent work \cite{components} 
we approach generalized complex structures  from the view-point  of endomorphisms (rather than $(1,0)$-bundles)  and we address 
global questions.  
Some of the results of \cite{components} may be seen as an application of Sections  \ref{dirac-section}  and \ref{bundle-section}.

\bigskip

{\bf Acknowledgements.} Research of VC is funded by the Deutsche\linebreak Forschungsgemeinschaft %For\-schungs\-ge\-mein\-schaft  
(DFG, German Research Foundation) under Germany's Excellence Strategy, EXC 2121 ``Quantum Universe,'' 390833306 and under -- SFB-Gesch\"aftszeichen 1624 -- Projektnummer 506632645.

\section{Preliminary material}\label{prelim-section}

Along the paper we make the following assumptions.

\begin{ass}\label{essential}{\rm 
We only consider transitive  Courant algebroids  with scalar product of neutral signature
and for which  the adjoint   representation 
$$
\mathrm{ad} : \mathcal G \rightarrow \mathrm{Der}_{\mathrm{sk}}\,  (\mathcal G ),\ X\mapsto \mathrm{ad}_{X} (Y) := [ X, Y]_{\mathcal G}
$$
of the quadratic Lie algebra bundle  is an isomorphism onto the bundle 
of skew-symmetric derivations. The latter condition holds, for instance, if the fibers of the  quadratic Lie algebra 
 bundle are semisimple.  We  denote by  $\mathrm{Aut} (\mathcal G )$ the  automorphism group of  $\mathcal G$ which preserves both 
its Lie bracket $[\cdot , \cdot ]_{\mathcal G}$ and scalar product $\langle \cdot , \cdot \rangle_{\mathcal G}.$ We use the same symbol for a tensor or tensor  field and its complex-linear extension.}   
\end{ass}

Let $E= TM\oplus T^{*}M\oplus \mathcal G$ be a standard Courant algebroid, with quadratic Lie algebra bundle
$(\mathcal G , [\cdot , \cdot ]_{\mathcal G}, \langle \cdot , \cdot \rangle_{\mathcal G})$  and defining data
$(\nabla , R, H).$  
Recall that $\nabla$ is a connection on $\mathcal G$ which preserves the Lie bracket  $[\cdot , \cdot ]_{\mathcal G}$ and scalar product 
$\langle \cdot , \cdot \rangle_{\mathcal G}$, $R\in \Omega^{2} (M, \mathcal G)$, $H\in \Omega^{3}(M)$  
are such that 
\begin{equation}\label{def-data}
R^{\nabla}(X, Y) r = [R(X, Y), r]_{\mathcal G},\ d^{\nabla}R =0,\ d H = \langle R\wedge R\rangle_{\mathcal G},
\end{equation}
for any $X, Y\in {\mathfrak  X} (M)$, $r\in \Gamma (\mathcal G)$, 
where $R^{\nabla}$ denotes the  curvature of $\nabla$.
The Dorfman bracket of $E$ is given by 
\begin{align}
\nonumber& [X, Y] = \mathcal L_{X}Y + i_{Y} i_{X} H + R(X, Y)\\
\nonumber& [ X, r] =  - 2\langle i_{X}R, r\rangle_{\mathcal G} + \nabla_{X} r \\
\nonumber& [r_{1}, r_{2} ] = 2 \langle \nabla r_{1}, r_{2}\rangle_{\mathcal G} +  [r_{1}, r_{2} ]_{\mathcal G} \\
\label{dorfman1}& [X, \eta ] = \mathcal L_{X} \eta ,\ [\eta_{1}, \eta_{2} ] = [ r, \eta ] =0,
\end{align}
for any $X , Y\in {\mathfrak X}(M)$,   $\eta , \eta_{1}, \eta_{2} \in \Omega^{1}(M)$,  $r, r_{1} r_{2} \in \Gamma (\mathcal G)$ and 
\begin{equation}\label{dorfman2}
[u, v] + [v, u] = 2 d \langle u, v\rangle ,\ \forall u, v\in \Gamma (E),
\end{equation} 
where 
$$
\langle  X+\xi + r_{1}, Y +\eta + {r}_{2}  \rangle = \frac{1}{2} (\xi (Y) +\eta (X)) +\langle r_{1}, r_{2} \rangle_{\mathcal G}
$$ 
is the scalar product of $E$. 
Any
transitive Courant algebroid is isomorphic to a standard Courant algebroid (see \cite{chen}).

A (fiber-preserving)  Courant algebroid isomorphism $I : E_{1} \rightarrow E_{2}$ between two standard Courant algebroids 
over a manifold $M$, with 
quadratic Lie algebra bundles $(\mathcal G_{i}, [\cdot  , \cdot ]_{\mathcal G_{i}}, \langle \cdot , \cdot \rangle_{\mathcal G_{i}} )$ and
defining  data  $(\nabla^{i}, R_{i}, H_{i})$,  is determined by a system $(K, \Phi  , \beta)$ 
 (called components of $I$), 
where
$K\in \mathrm{Isom} (\mathcal G_{1}, \mathcal G_{2})$ is an isomorphism of quadratic Lie algebra bundles, 
$\Phi \in \Omega^{1}(M, \mathcal G_{2})$,  $\beta \in \Omega^{2}(M)$, 
as follows:
\begin{align}
\nonumber& I(X) = X+  i_{X}\beta - \Phi^{*}\Phi  (X) +\Phi (X) \\
\nonumber& I(\eta ) =\eta\\
\label{def-iso}& I(r)  = - 2 \Phi^{*} K(r) + K(r),
\end{align}
for any $X\in {\mathfrak X}(M)$,
$\eta \in \Omega^{1}(M)$, $r\in \Gamma (\mathcal G_{1})$. 
Above $\Phi^{*} : \mathcal G_{2} \rightarrow T^{*}M$ is defined by 
$$
(\Phi^{*} r) (X):= \langle  r, \Phi (X)\rangle_{\mathcal G_{2}},\ \forall  r\in \mathcal G_{2},\ X\in  {\mathfrak X}(M).
$$
The  components  $(K, \Phi , \beta)$ satisfy 
\begin{align}
\nonumber& \mathrm{ad}_{\Phi (X)} = K\circ \nabla^{1}_{X}\circ K^{-1} -\nabla^{2}_{X},\\
\label{def-cond}& H_{2} = H_{1} - d\beta -  \langle (KR_{1} + R_{2} )\wedge \Phi\rangle_{\mathcal G_{2}} + c^{\Phi}
\end{align}
where  $X\in {\mathfrak X}(M)$ and  $c^{\Phi}(X, Y, Z):= \langle \Phi (X), [ \Phi (Y), \Phi (Z) ]_{\mathcal G_{2}} \rangle_{\mathcal G_{2}}$.
The right hand side of  the first relation in (\ref{def-cond}) is a skew-symmetric derivation and from Assumption \ref{essential} i) 
 $\Phi$ is uniquely determined by $K$ and $\nabla^{i}$. From  \cite[Lemma 5]{cortes-david-JSG} and Assumption \ref{essential} i) again,  
 the first relation in (\ref{def-cond})
 implies 
\begin{equation}\label{def-cond-1}
R_{2}(X, Y) = K R_{1}(X, Y) -(d^{\nabla^{2}} \Phi )(X, Y) -  [\Phi (X), \Phi (Y) ]_{\mathcal G_{2}}.
\end{equation}

If  $f : M \rightarrow N$ is a smooth map and $E$ is a standard Courant algebroid over $N$, with quadratic Lie algebra bundle
$(\mathcal G , [\cdot , \cdot ]_{\mathcal G}, \langle \cdot , \cdot \rangle_{\mathcal G} )$ and defining data $(\nabla , R, H)$, then 
the pullback quadratic Lie algebra bundle $(f^{*}\mathcal G , f^{*}  [\cdot , \cdot ]_{\mathcal G},$  $f^{*}\langle \cdot , \cdot \rangle_{\mathcal G} )$
together with  $(f^{*} \nabla , f^{*}R, f^{*}H)$ define the {\cmssl pullback Courant algebroid} 
$f^{!}E$  to $M$
(see  \cite{poisson} and  \cite[Lemma 27]{cortes-david-JSG}). 
If $f$ is a diffeomorphism, there is a natural map $f^{!} : E \rightarrow f^{!} E$  
(called a {\cmssl pullback}) which
maps $X_{f(x)} +\xi_{f(x)} + r_{f(x)}$ from $E_{f(x)} = T_{f(x)}N \oplus T^{*}_{f(x)}N\oplus \mathcal G_{f(x)}$ 
to $(d_xf)^{-1} (X_{f(x)}) + \xi_{f(x)}\circ (d_{x} f)   + r_{f(x)}$
from  $(f^{!}E)_{x} = T_{x}M \oplus T^{*}_{x}M\oplus  (f^{*}\mathcal G )_{x}
$, for any $x\in M.$ 
An  {\cmssl equivalence} 
$I: E_{1} \rightarrow E_{2}$ 
between two Courant algebroids  is a composition of  isomorphisms and
pullbacks.  Two Courant algebroids are {\cmssl equivalent} (respectively,  {\cmssl isomorphic}) if there is an equivalence (respectively, isomorphism) between them.

\section{Untwisted Courant algebroids}\label{standard-untwisted-section}

\begin{defn}\label{untwisted:def}
Let $ E = TM\oplus T^{*}M \oplus \mathcal G$ be a standard Courant algebroid with 
quadratic Lie algebra bundle $(\mathcal G , [\cdot , \cdot ]_{\mathcal G}, \langle \cdot , \cdot \rangle_{\mathcal G})$ and 
defining data $(\nabla , R, H).$  Then  $E$ is called  {\cmssl untwisted}  if 
$R =0$ and $H =0$ (in particular, $\nabla$ is flat). 
\end{defn}

 \begin{prop} \label{transCA_loc_triv:prop} Any transitive  Courant algebroid is locally isomorphic to an untwisted Courant algebroid
 with quadratic Lie algebra bundle the trivial bundle 
 $\mathcal G=M\times \mathfrak g$   with fiber  a quadratic Lie algebra $(\mathfrak g, [\cdot , \cdot ]_{\mathfrak g},\langle
\cdot , \cdot \rangle_{\mathfrak g})$ and $\nabla$  the  canonical (flat)  connection on $ \mathcal G$.
\end{prop}

\begin{proof} 
It is sufficient to prove the statement for standard Courant algebroids. 
Let $E = TM\oplus T^{*}M\oplus \mathcal G$ be a standard Courant algebroid with 
quadratic Lie algebra bundle $(\mathcal G , [\cdot , \cdot ]_{\mathcal G}, \langle \cdot , \cdot \rangle_{\mathcal G})$ and 
defining data $(\nabla , R, H).$ Let $U\subset M$ open and sufficiently small such that
$\mathcal G\vert_{U}$ admits a trivialization $(r_{i})$ in which $\langle \cdot , \cdot \rangle_{\mathcal G}$ 
and $[\cdot , \cdot ]_{\mathcal G}$ are constant
(see \cite[Proposition 90]{cortes-david-JSG}).
In this trivialization, 
we write $\nabla = d +\Omega$, where 
$\Omega \in \Omega^{1}(U, \mathcal G\vert_{U}).$ 
Since $\nabla$ preserves   $ [\cdot , \cdot ]_{\mathcal G}$ and  $\langle \cdot , \cdot \rangle_{\mathcal G}$, 
$\Omega_{X}\in \mathrm{Der}_{\mathrm{sk}} (\mathcal G\vert_{U})$,  for any $X\in \mathfrak{X}(U)$. From 
Assumption \ref{essential} i) we obtain that  $\Omega_{X} = \mathrm{ad}_{\alpha (X)}$, where $\alpha \in \Omega^{1} (U, \mathcal G\vert_{U} ).$  
Let $I\in \mathrm{End}\, (TU \oplus T^{*}U \oplus \mathcal G\vert_{U} )$ be defined by 
(\ref{def-iso}) with  
$(K:= \mathrm{Id}, \Phi := \alpha  , \beta :=0).$
Then $I$ is a Courant algebroid isomorphism from $E$ to another standard Courant algebroid $E_{1}$ with the same quadratic Lie algebra bundle.
Let  $(\nabla^{1},  R_{1}, H_{1})$ be the defining data for $E_{1}.$ Then, for every $X\in {\mathfrak X}(U)$ and 
$r\in \Gamma (\mathcal G\vert_{U})$, 
\begin{align}
\nabla^{1}_{X} r = \nabla_{X} r +[r, \alpha (X) ]_{\mathcal G} = X(r) + [\alpha (X), r]_{\mathcal G} +  [r, \alpha (X)]_{\mathcal G}= X(r),
\end{align}
i.e.\ $\nabla^{1}$ is the canonical connection defined by the trivialization $(r_{i}).$
Therefore,  without loss of generality we can assume from the very beginning that  $\mathcal G\vert_{U}$
admits a $\nabla$-parallel  trivialization in which $\langle \cdot , \cdot \rangle_{\mathcal G}$ 
and $[\cdot , \cdot ]_{\mathcal G}$ are constant.
Since $\nabla$ is flat,  from Assumption \ref{essential} i) 
 $R=0$ 
 and $H$ is closed (see relations (\ref{def-data})).
 Let  $b\in \Omega^{2} (U)$ such that $H = d b$. Then 
$I\in \mathrm{End} (TU\oplus T^{*}U \oplus \mathcal G\vert_{U})$ defined by (\ref{def-iso}) with  
$(K:= \mathrm{Id}, \Phi := 0, \beta  := b)$,  is a Courant algebroid isomorphism from  $E$ to another standard Courant algebroid
$E_{2}$, with the same quadratic Lie algebra bundle. Let $(\nabla^{2}, R_{2}, H_{2})$ be the defining data for $E_{2}.$ Then 
$\nabla^{2} = \nabla$, $R_{2} = R =0$  and $H_{2} =0$, see relations (\ref{def-cond}) and (\ref{def-cond-1}).
In particular, $E_{2}$ is untwisted. 
The claim follows from the next lemma with  isomorphism  
$K: \mathcal G\vert_{U} \rightarrow U\times \mathfrak{g}$ determined by the trivialization 
$(r_{i}).$ 
\end{proof}

\begin{lem}\label{iso-untwisted}
Let  $E_{i }= TM\oplus T^{*}M\oplus  \mathcal G_{i}$  ($i=1,2$) be two untwisted  Courant algebroids with quadratic Lie algebra
bundles $(\mathcal G_{i}, \langle \cdot , \cdot \rangle_{\mathcal G_{i}}, [\cdot , \cdot ]_{\mathcal G_{i}})$ and (flat) connections
$\nabla^{i}$ on $\mathcal G_{i}.$ 
Any  isomorphism $K\in \mathrm{Isom}\, (\mathcal G_{1}, \mathcal G_{2})$ of quadratic Lie algebra bundles 
can be extended locally to a Courant algebroid isomorphism  
$I:E_{1} \rightarrow E_{2}$,  unique up to
an exact $2$-form.
\end{lem}

\begin{proof} 
We need to find $\Phi$ and $\beta$ such that relations (\ref{def-cond}) hold with  
$R_{i} =0$, $H_{i} =0$ and  the given $K$ and $\nabla^{i}$. 
As already mentioned,   $\Phi\in \Omega^{1}(M, \mathcal G_{2} )$ is uniquely determined by the 
first relation in (\ref{def-cond}). For the local existence of $\beta$,  we need to 
show that the $3$-form
\begin{equation}
c^{\Phi}(X, Y, Z)= \langle \Phi (X), [\Phi (Y), \Phi (Z) ]_{\mathcal G_{2}} \rangle_{\mathcal G_{2}},\ \forall X, Y, Z\in \mathfrak{X}(M) 
\end{equation}
is closed.   
From   relation (\ref{def-cond-1}) with $R_{i}=0$,
\begin{equation}
c^{\Phi}(X, Y, Z) = - \langle \Phi (X),  (d^{\nabla^{2}} \Phi )(Y, Z)   \rangle_{\mathcal G_{2}} = -\frac{1}{3} \langle  \Phi \wedge d^{\nabla^{2}} \Phi 
\rangle_{\mathcal G_{2}} (X, Y, Z).
\end{equation}
We deduce that  $c^{\Phi}$ is closed if and only if $\langle d^{\nabla^{2}}\Phi \wedge d^{\nabla^{2}}\Phi \rangle_{\mathcal G_{2}}=0$, 
or 
\begin{equation}
\Omega :=\langle  [\Phi, \Phi ]_{\mathcal G_{2}} \wedge [\Phi  , \Phi  ]_{\mathcal G_{2} } \rangle_{\mathcal G_{2}}=0,
\end{equation}
where we used that  $(d^{\nabla^{2}})^{2} =0$, since $\nabla^{2}$ is flat. 
Applying definitions,  
$$
i_{X} \Omega =\langle  [\Phi (X), \Phi ]\wedge [ \Phi , \Phi ]\rangle_{\mathcal G_{2}} +
\langle  [ \Phi , \Phi ]\wedge [ \Phi (X), \Phi ]\rangle_{\mathcal G_{2}}\\
= 2\langle  [\Phi (X) ,\Phi ] \wedge [ \Phi , \Phi ]\rangle_{\mathcal G_{2}}
$$
and
\begin{equation}
\frac{1}{2} i_{Y} i_{X} \Omega = \langle   [\Phi (X), \Phi (Y) ] , [ \Phi , \Phi ]\rangle_{\mathcal G_{2}} -
\langle  [ \Phi (X), \Phi ]\wedge
[ \Phi (Y), \Phi ] \rangle_{\mathcal G_{2}} ,
\end{equation}
where, in order to simplify notation, we wrote $[\cdot , \cdot ]$ instead of $[\cdot , \cdot ]_{\mathcal G_{2}}.$ 
By similar computations, we arrive at
\begin{align*}
\nonumber& \frac{1}{2} i_{V} i_{Z} i_{Y} i_{X} \Omega = \langle [ \Phi (X), \Phi (Y) ], [ \Phi (Z), \Phi (V ) ] \rangle_{\mathcal G_{2}}\\
\nonumber& 
- \langle [ \Phi (X), \Phi (Z) ] , [ \Phi (Y), \Phi (V ) ] \rangle_{\mathcal G_{2}} 
+ \langle [ \Phi (X), \Phi (V) ], [ \Phi (Y), \Phi (Z ) ]\rangle_{\mathcal G_{2}}
\end{align*}
which vanishes since $\langle \cdot , \cdot \rangle_{\mathcal G_{2}}$ is $\mathrm{ad}$-invariant and $[\cdot , \cdot ]_{\mathcal G_{2}}$ satisfies the Jacobi identity. 
\end{proof}

\begin{notation} \label{untwisted:notation} {\rm   From now on we assume that the
 quadratic Lie algebra bundle  of an untwisted Courant algebroid is the trivial bundle with  the canonical connection, as in
Proposition \ref{transCA_loc_triv:prop}. Since our main theorem is of local nature, there is no loss of generality in doing so
(in view of Proposition \ref{transCA_loc_triv:prop}).}
\end{notation}

\section{Dirac structures  of $V\oplus V^{*}\oplus \mathfrak{g}$}\label{dirac-section}

Let $V$ and  $\mathfrak{g}$  be  real vector spaces and $\langle\cdot ,\cdot \rangle_{\mathfrak{g}}$ a scalar product on $\mathfrak{g}$ of neutral signature.
We consider the direct sum $V\oplus V^{*} \oplus \mathfrak{g}$, with scalar product 
\begin{equation}
\langle X + \xi + r, Y+\eta + s  \rangle = \frac{1}{2} ( \xi (Y) +\eta (X)) + \langle r, s \rangle_{\mathfrak{g}},
\end{equation}
for any $X, Y\in V$, $\xi , \eta \in V^{*}$ and $r, s \in \mathfrak{g}.$  In this section  we   describe 
Dirac structures (i.e.\ maximal  isotropic
subspaces)  of $(V\oplus V^{*} \oplus \mathfrak{g} , \langle \cdot , \cdot \rangle )$.
For  $\mathfrak{g} = \{ 0\}$, we obtain the well-known description of Dirac structures of $V \oplus V^{*}$, see e.g.\ \cite{gualtieri-thesis,gualtieri-annals}.
We denote by $\pi_V$, $\pi_{V^*}$ and $\pi_{\mathfrak g}$ the projections onto the summands of $V\oplus V^{*} \oplus \mathfrak{g}$.\

Let $L\subset V\oplus V^{*} \oplus \mathfrak{g}$ be a Dirac structure  and $W := \pi_V(L) \subset V$.
For any $X\in W$, we define the affine subspace
\begin{equation} \label{DX:eq}\mathcal D_{X} :=\pi_{\mathfrak g} ((\pi_V|_L)^{-1}(X))\subset \mathfrak{g},\quad \pi_V|_L : L \to W.\end{equation}   
Let $\sigma_{0}: W \rightarrow \mathfrak{g}$ be a linear map such that 
$\sigma_{0} (X) \in {\mathcal D}_{X}$, for any $X\in W.$
Such a map can be obtained by choosing a complement $\mathcal K\subset L$ of $\ker \pi_V|_L$ and 
defining $\sigma_0 (X) := \pi_{\mathfrak g} (\pi_V|_{\mathcal K})^{-1} (X)$.

\begin{notation}{\rm
For $A, B\subset \mathfrak{g}$, we define $A+ B$ to be the set of vectors $a+ b$ where $a\in A$ and $b\in B.$}
\end{notation}

The next lemma is straightforward. 

\begin{lem}
The set  $\mathcal D_{0}$ is an isotropic subspace of $(\mathfrak{g}, \langle \cdot , \cdot \rangle_{\mathfrak{g}})$. For any $X\in W$,
 \begin{equation}\label{sigma-d0} 
 \mathcal D_{X} =  \sigma_{0} (X) + \mathcal D_{0}.
 \end{equation}
\end{lem}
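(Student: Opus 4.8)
The plan is to unwind the definitions and reduce everything to the isotropy of $L$ together with elementary linear algebra of the projections $\pi_V$, $\pi_{V^*}$, $\pi_{\mathfrak g}$. First I would note that $(\pi_V|_L)^{-1}(0)=\ker(\pi_V|_L)=L\cap(V^*\oplus\mathfrak g)$ is a linear subspace of $L$, so $\mathcal D_0=\pi_{\mathfrak g}(L\cap(V^*\oplus\mathfrak g))$ is automatically a linear subspace of $\mathfrak g$; only the isotropy statement requires an argument. For that, given $r,s\in\mathcal D_0$, I would pick $\xi,\eta\in V^*$ with $\xi+r\in L$ and $\eta+s\in L$; since both of these elements have vanishing $V$-component, the formula for the scalar product collapses to $\langle\xi+r,\eta+s\rangle=\langle r,s\rangle_{\mathfrak g}$, and this vanishes because $L$ is isotropic. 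Hence $\langle r,s\rangle_{\mathfrak g}=0$ for all $r,s\in\mathcal D_0$.

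For the identity $\mathcal D_X=\sigma_0(X)+\mathcal D_0$ I would fix $X\in W$ and use that, by the choice of $\sigma_0$, there exists $\xi_X\in V^*$ with $X+\xi_X+\sigma_0(X)\in L$. For the inclusion $\supseteq$, I add to this element an arbitrary $\xi_0+r_0\in L$ with $\xi_0\in V^*$ and $r_0\in\mathcal D_0$; the sum still lies in $L$, has $V$-component $X$ and $\mathfrak g$-component $\sigma_0(X)+r_0$, so $\sigma_0(X)+r_0\in\mathcal D_X$. For $\subseteq$, given any $r\in\mathcal D_X$ with witness $X+\xi+r\in L$, I subtract $X+\xi_X+\sigma_0(X)$; the difference lies in $L\cap(V^*\oplus\mathfrak g)$ and has $\mathfrak g$-component $r-\sigma_0(X)$, so $r-\sigma_0(X)\in\mathcal D_0$, i.e.\ $r\in\sigma_0(X)+\mathcal D_0$.

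There is no genuine obstacle here; the only point that needs care is the bookkeeping of components — remembering that elements of $L\cap(V^*\oplus\mathfrak g)$ have zero $V$-component, which is exactly what kills the cross terms in the scalar product — and the fact that the same $\sigma_0$, chosen once and for all, is used uniformly for every $X$. The existence of a linear $\sigma_0$ with $\sigma_0(X)\in\mathcal D_X$ is already supplied in the text (take a complement $\mathcal K$ of $\ker\pi_V|_L$ in $L$ and set $\sigma_0=\pi_{\mathfrak g}\circ(\pi_V|_{\mathcal K})^{-1}$), so it may be assumed.
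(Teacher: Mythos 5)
Your proof is correct and follows essentially the same route as the paper's: the isotropy of $\mathcal D_{0}$ is obtained by restricting the isotropy of $L$ to $\ker \pi_V|_L$, where the $V$-components vanish and the pairing collapses to $\langle r,s\rangle_{\mathfrak g}$. Your two inclusions for $\mathcal D_X=\sigma_0(X)+\mathcal D_0$ are simply an explicit unwinding of the paper's one-line remark that the affine fibre $(\pi_V|_L)^{-1}(X)$ is modelled on $\ker\pi_V|_L$.
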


\begin{prop}\label{vspace-max-iso} Any Dirac structure  of $V \oplus V^{*} \oplus \mathfrak{g}$ is of the form 
\begin{align}
\nonumber& L =\{ X +\xi + \sigma (X)+ r\mid X\in W,\ \xi\in V^{*},\ r\in \mathcal D, \\
\label{form-L}&  \xi (Y) = 2 \epsilon (X, Y) - \langle \sigma (Y) ,  \sigma (X) + 2r\rangle_{\mathfrak{g}},\ \forall Y\in W\} ,
\end{align}
where $W\subset V$ is a subspace,  $\mathcal D \subset \mathfrak{g}$ is a Dirac structure, $\sigma \in \mathrm{Hom}(W,\mathfrak g)$ and
$\epsilon \in \Lambda^{2} W^{*}$.
Conversely, any choice of such data  $(W,  \mathcal D ,\sigma ,  \epsilon )$ defines a Dirac structure  
\[ L = L(W,   \mathcal D ,\sigma ,  \epsilon )\subset V \oplus V^{*} \oplus \mathfrak{g}\] 
by \eqref{form-L}. The homomorphism $\sigma$ automatically satisfies the condition $\sigma (X)\in \mathcal D_X$, with respect to $L=L(W,   \mathcal D ,  \sigma  ,\epsilon )$, for all $X\in W$.
\end{prop}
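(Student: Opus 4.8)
The plan is to prove both directions. For the forward direction, start from a Dirac structure $L$ and the data extracted in the preceding lemmas: $W = \pi_V(L)$, the isotropic subspace $\mathcal D_0 = \pi_{\mathfrak g}(\ker \pi_V|_L)$, and a linear lift $\sigma_0 : W \to \mathfrak g$ with $\sigma_0(X) \in \mathcal D_X$. First I would set $\mathcal D := \mathcal D_0$ and $\sigma := \sigma_0$, and observe that every element of $L$ can be written uniquely as $X + \xi + \sigma(X) + r$ with $X \in W$, $r \in \mathcal D$, $\xi \in V^*$: indeed, given $u \in L$, put $X := \pi_V(u)$; then $u - (X + \sigma(X)) \in \ker \pi_V|_L$, so its $\mathfrak g$-component lies in $\mathcal D_0 = \mathcal D$ and its $V^*$-component is some $\xi \in V^*$. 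The remaining task is to identify the constraint on $\xi$. The key computation is to pair two elements $u = X + \xi + \sigma(X) + r$ and $u' = Y + \eta + \sigma(Y) + s$ of $L$ and use isotropy: $0 = \langle u, u'\rangle = \tfrac12(\xi(Y) + \eta(X)) + \langle \sigma(X) + r, \sigma(Y) + s\rangle_{\mathfrak g}$. Taking $u' \in \ker \pi_V|_L$ (i.e.\ $Y = 0$, $\eta = 0$, $s \in \mathcal D$ arbitrary) forces $\xi(Y)$-free terms and shows $\xi$ restricted to $W$ is determined modulo $W^\circ$ by the bilinear expression $-\langle \sigma(Y), \sigma(X) + 2r\rangle_{\mathfrak g}$ up to a term symmetric in $X,Y$; defining $\epsilon(X,Y) := \tfrac14(\xi_X(Y) + \langle\sigma(Y),\sigma(X)\rangle_{\mathfrak g} + \langle \sigma(X) + 2r, \sigma(Y)\rangle_{\mathfrak g} \cdots)$ appropriately and checking antisymmetry using \eqref{dorfman2}-free algebra gives a well-defined $\epsilon \in \Lambda^2 W^*$. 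I would organize this so that $\epsilon(X,Y) := \tfrac12\big(\xi(Y) + \langle \sigma(Y), \sigma(X) + 2r\rangle_{\mathfrak g}\big)$ for any representative, then verify independence of the representative and antisymmetry via the isotropy relation applied with roles of $u,u'$ swapped.

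For the converse, given $(W,\sigma,\mathcal D,\epsilon)$ with $\mathcal D \subset \mathfrak g$ a Dirac structure, I would define $L$ by \eqref{form-L} and check two things: that $L$ is isotropic, and that it is maximal. Isotropy is the direct computation: for $u = X + \xi + \sigma(X) + r$ and $u' = Y + \eta + \sigma(Y) + s$ in $L$, substitute the defining formulas for $\xi(Y)$ and $\eta(X)$ into $\langle u,u'\rangle = \tfrac12(\xi(Y) + \eta(X)) + \langle \sigma(X)+r, \sigma(Y)+s\rangle_{\mathfrak g}$; the symmetric quadratic pieces cancel, the $\epsilon$-terms cancel by antisymmetry, and the cross terms cancel using that $\langle r, s\rangle_{\mathfrak g} = 0$ since $\mathcal D$ is isotropic. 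For maximality one can argue by dimension count: $\dim L = \dim W + \dim \mathcal D + (\dim V - \dim W) = \dim V + \dim \mathcal D$, while a maximal isotropic subspace of $V \oplus V^* \oplus \mathfrak g$ has dimension $\dim V + \tfrac12 \dim \mathfrak g$; since $\mathcal D$ is maximal isotropic in $\mathfrak g$, $\dim \mathcal D = \tfrac12 \dim \mathfrak g$, giving equality. Alternatively, and more robustly if $\mathfrak g$ is infinite-dimensional in spirit, one checks directly that $L = L^\perp$ by showing any $v \in (V\oplus V^*\oplus\mathfrak g)$ orthogonal to all of $L$ must itself have the form \eqref{form-L}.

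Finally, the last assertion — that $\sigma(X) \in \mathcal D_X$ with respect to $L = L(W,\sigma,\mathcal D,\epsilon)$ — is immediate: taking $r = 0$ in \eqref{form-L} exhibits an element of $L$ with $\pi_V$-image $X$ and $\pi_{\mathfrak g}$-image $\sigma(X)$, so $\sigma(X) \in \mathcal D_X$ by the definition \eqref{DX:eq}. I expect the main obstacle to be bookkeeping in the forward direction: verifying that the formula for $\epsilon$ is well-defined (independent of the choice of element of $L$ representing a given $X$, equivalently independent of $r \in \mathcal D$) and genuinely antisymmetric. This rests entirely on the isotropy of $L$ and the isotropy of $\mathcal D_0$, so the argument is a careful but elementary manipulation of the bilinear form; no deeper input is needed.
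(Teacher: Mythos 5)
Your overall route is the same as the paper's---extract $(W,\mathcal D_0,\sigma_0)$ from the preceding lemmas, use isotropy of $L$ to pin down the $V^*$-component, and package the residual bilinear data into a $2$-form $\epsilon$---and your main computations (well-definedness and antisymmetry of $\epsilon$, the isotropy check and the dimension count for the converse, the final remark that taking $r=0$ gives $\sigma(X)\in\mathcal D_X$) are correct. Your formula $\epsilon(X,Y)=\tfrac12\bigl(\xi(Y)+\langle\sigma(Y),\sigma(X)+2r\rangle_{\mathfrak g}\bigr)$ is exactly the skew-symmetric part of the auxiliary bilinear form $\epsilon'(X,Y)=\tfrac12\xi(Y)+\langle\sigma(Y),r\rangle_{\mathfrak g}$ that the paper introduces first and then skew-symmetrizes, so this is a cosmetic reorganization rather than a different argument.

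There is one genuine gap in your forward direction as written: you establish only the inclusion $L\subseteq L(W,\sigma,\mathcal D_0,\epsilon)$, and you never show that $\mathcal D_0$ is \emph{maximal} isotropic in $\mathfrak g$---the preceding lemma gives only that it is isotropic, whereas the statement requires $\mathcal D$ to be a Dirac structure of $\mathfrak g$. Both missing pieces follow from the maximality of $L$: the right-hand side of \eqref{form-L} is isotropic (by the computation you carry out for the converse, which uses only that $\mathcal D_0$ is isotropic) and has dimension $\dim V+\dim\mathcal D_0\leq \dim V+\tfrac12\dim\mathfrak g$, so maximal isotropy of $L$ forces equality of the two subspaces and $\dim\mathcal D_0=\tfrac12\dim\mathfrak g$. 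This is exactly the dimension count you deploy in the converse; you need to invoke it in the forward direction as well, as the paper does.
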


\begin{proof}  Let $L\subset V \oplus V^{*} \oplus \mathfrak{g}$ be a Dirac structure. From the previous lemma, 
\begin{equation}
L \subset \{ X +\xi +  \sigma_{0} (X) + r \mid X\in W,\  \xi \in V^{*},\, r\in \mathcal D_{0}\} ,
\end{equation}
where  $\sigma_0 \in \mathrm{Hom}(W,\mathfrak g)$ is any linear map satisfying  
$\sigma_0(X) \in \mathcal D_X$ for all $X\in W$, 
$\mathcal D_{X}$ is determined by $L$ via  \eqref{DX:eq}
and $\mathcal D_{0}$ is isotropic. 
Consider  $X +\xi_{i} + \sigma_{0} (X) + r_{i}  \in L$ ($i=1,2$).  Their difference $(\xi_{1} -\xi_{2}) + (r_{1} - r_{2})$ belongs to $L$, and, since $L$ is isotropic,
any $Y +\eta + \sigma_{0}(Y) + r\in L$ is orthogonal to $(\xi_{1} -\xi_{2}) + (r_{1} - r_{2})$. We deduce that
\begin{equation}
\frac{1}{2} (\xi_{1} -\xi_{2}) (Y) + \langle  \sigma_{0}(Y) , r_{1} - r_{2}\rangle_{\mathfrak{g}} =0,
\end{equation}
where we used that $\langle r, r_{i} \rangle_{\mathfrak{g}} =0$ since $\mathcal D_{0}$ is isotropic. 
It follows that the bilinear form 
\begin{equation}\label{epsprime:eq}
\epsilon' : W\times W \rightarrow \mathbb{R},\ \epsilon'  (X, Y) := \frac{1}{2} \xi (Y) +\langle \sigma_{0} (Y), r\rangle_{\mathfrak{g}}
\end{equation}
is well-defined, i.e.\ independent of the choice of $\xi$ and  $r$ such that $X +\xi + \sigma_{0}(X) + r\in L.$ 
We proved that 
\begin{align}
\nonumber& L \subset  \{ X +\xi + \sigma_{0}(X) + r\mid X\in W,\ \xi\in V^{*},\ r\in \mathcal D_{0},\\
\label{form-L-1}&  \xi (Y) = 2  ( \epsilon' (X, Y) -\langle \sigma_{0}(Y) , r\rangle_{\mathfrak{g}} ),\ \forall Y\in W\} .
\end{align}
The dimension of  the vector space on the right-hand side of (\ref{form-L-1}) is equal to   
$$
\mathrm{dim}\, W + \mathrm{dim}\, \mathcal D_{0} +  (\mathrm{dim}\, V - \mathrm{dim}\, W)=  
\mathrm{dim}\, V + \mathrm{dim}\, \mathcal D_{0}\leq
\mathrm{dim}\, V  + \frac{1}{2} \mathrm{dim}\, \mathfrak{g}
$$
where in the last inequality we used that  $\mathcal D_{0}$ is isotropic (in particular,  of dimension at most
$\frac{1}{2}   \dim\, \mathfrak{g}$). It follows that $L$ is  isotropic   of dimension $\mathrm{dim}\, V  + \frac{1}{2} \dim\, \mathfrak{g}$ 
(i.e.\ maximal  isotropic) if and only if equality holds in (\ref{form-L-1}), the  vector space defined by the right-hand side of 
(\ref{form-L-1}) is isotropic and $\mathcal D_{0}$ is maximal isotropic. 
A straightforward computation shows that  the right-hand side of  (\ref{form-L-1}) is isotropic if and only if 
\begin{equation}
\epsilon' (X, Y) +\epsilon' (Y, X) +\langle \sigma_{0}(X), \sigma_{0}(Y) \rangle_{\mathfrak{g}} =0,\ \forall X, Y\in W.
\end{equation}
Let $\epsilon \in \Lambda^{2} W^{*}$ be the skew-part of  $\epsilon'.$ 
The claim that $L$ has the form \eqref{form-L}  follows by writing
$$
\epsilon' (X, Y ) = \epsilon (X, Y) -  \frac{1}{2} \langle \sigma_{0}(X), \sigma_{0} (Y)\rangle_{\mathfrak{g}}
$$
and replacing it into  (\ref{form-L-1}).  To finish the proof we observe that if $L$ is defined by \eqref{form-L}, then $\mathcal D_X = \sigma (X)+\mathcal D$ and, hence, 
$\sigma (X)\in \mathcal D_X$ and $\mathcal D_0 = \mathcal D$.
\end{proof}

Next we study   the map 
\[ (W,  \mathcal D ,\sigma   , \epsilon ) \mapsto L(W,  \mathcal D , \sigma , \epsilon ).\]
It is defined on the set $\mathcal S$  of tuples $(W,   \mathcal D , \sigma ,  \epsilon )$ consisting 
of a subspace $W\subset V$, a Dirac structure 
$\mathcal D \subset \mathfrak g$, 
$\sigma \in \mathrm{Hom}(W,\mathfrak g)$  and $\epsilon \in \Lambda^2 W^*$. 

\begin{prop} \label{bundle}
The set $\mathcal S$
is a union of smooth vector bundles $\mathcal{S}_d$, $d=0,\ldots, n=\dim V$, with the projection 
$(W,   \mathcal D ,\sigma ,   \epsilon ) \to (W,  \mathcal D)$.  
\end{prop}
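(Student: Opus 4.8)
The plan is to stratify $\mathcal S$ by $d:=\dim W$ and to exhibit each stratum
\[
\mathcal S_d:=\{(W,\sigma,\mathcal D,\epsilon)\in\mathcal S:\dim W=d\}
\]
as the total space of a smooth vector bundle over the space of pairs $(W,\mathcal D)$. Since $\dim W$ is locally constant, $\mathcal S=\bigsqcup_{d=0}^{n}\mathcal S_d$ as sets, and it suffices to treat each $d$ separately. Fix $d$ and let $\mathcal B_d$ be the set of pairs $(W,\mathcal D)$ with $W\subset V$ a $d$-dimensional subspace and $\mathcal D\subset\mathfrak g$ a Dirac structure. I claim $\mathcal B_d=\mathrm{Gr}_d(V)\times\mathrm{Dir}(\mathfrak g)$ is a smooth manifold, where $\mathrm{Gr}_d(V)$ is the ordinary Grassmannian and $\mathrm{Dir}(\mathfrak g)$ denotes the set of maximal isotropic subspaces of $(\mathfrak g,\langle\cdot,\cdot\rangle_{\mathfrak g})$.

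The one nontrivial input is the smooth structure on $\mathrm{Dir}(\mathfrak g)$. Since $\langle\cdot,\cdot\rangle_{\mathfrak g}$ has neutral signature, every maximal isotropic subspace has dimension $m:=\tfrac12\dim\mathfrak g$, so $\mathrm{Dir}(\mathfrak g)\subset\mathrm{Gr}_m(\mathfrak g)$ is exactly the zero locus of the smooth section of $\mathrm{Sym}^2\mathcal T^{*}$ obtained by restricting $\langle\cdot,\cdot\rangle_{\mathfrak g}$ to the tautological bundle $\mathcal T\to\mathrm{Gr}_m(\mathfrak g)$. At an isotropic $\mathcal D_0$ one has $\mathcal D_0^{\perp}=\mathcal D_0$, hence $\mathfrak g/\mathcal D_0\cong\mathcal D_0^{*}$ via the scalar product, and the vertical derivative of this section at $\mathcal D_0$ identifies with the symmetrization map $\mathrm{Hom}(\mathcal D_0,\mathfrak g/\mathcal D_0)=\mathrm{Hom}(\mathcal D_0,\mathcal D_0^{*})\to\mathrm{Sym}^2\mathcal D_0^{*}$, $\phi\mapsto\phi+\phi^{*}$, which is surjective. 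Thus the section is transverse to the zero section, $\mathrm{Dir}(\mathfrak g)$ is a smooth submanifold (of dimension $\binom{m}{2}$), and $\mathcal B_d$ is a smooth manifold. (Equivalently, $\mathrm{Dir}(\mathfrak g)$ is a homogeneous space of the orthogonal group of $(\mathfrak g,\langle\cdot,\cdot\rangle_{\mathfrak g})$; families of isotropic subspaces are also treated in the appendix.)

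It remains to assemble the fibers $\mathrm{Hom}(W,\mathfrak g)\oplus\Lambda^2W^{*}$ into a smooth vector bundle over $\mathcal B_d$. Over $\mathrm{Gr}_d(V)$ let $\mathcal W$ be the tautological rank-$d$ bundle (fiber $W$ over $W$) and set $\underline{\mathfrak g}:=\mathrm{Gr}_d(V)\times\mathfrak g$; then $\mathrm{Hom}(\mathcal W,\underline{\mathfrak g}):=\mathcal W^{*}\otimes\underline{\mathfrak g}$ and $\Lambda^2\mathcal W^{*}$ are smooth vector bundles over $\mathrm{Gr}_d(V)$ by the usual functoriality of the tautological bundle. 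Pulling back along the projection $p\colon\mathcal B_d\to\mathrm{Gr}_d(V)$ and forming the Whitney sum, I put $\mathcal S_d:=p^{*}\bigl(\mathrm{Hom}(\mathcal W,\underline{\mathfrak g})\oplus\Lambda^2\mathcal W^{*}\bigr)$, a smooth vector bundle over $\mathcal B_d$ of constant rank $d\dim\mathfrak g+\binom{d}{2}$ whose fiber over $(W,\mathcal D)$ is $\mathrm{Hom}(W,\mathfrak g)\oplus\Lambda^2W^{*}$. By construction its total space is canonically the set of tuples $(W,\sigma,\mathcal D,\epsilon)$ with $\dim W=d$, i.e.\ $\mathcal S_d$, and the bundle projection is precisely $(W,\sigma,\mathcal D,\epsilon)\mapsto(W,\mathcal D)$. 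Taking the disjoint union over $d=0,\dots,n$ yields the proposition.

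\textbf{Main obstacle.} The only genuinely substantive step is the one isolated above: showing that the set of Dirac structures of $(\mathfrak g,\langle\cdot,\cdot\rangle_{\mathfrak g})$ carries a natural smooth-manifold structure (via the transversality computation, the orthogonal-group picture, or the appendix). Everything else — the Grassmannian $\mathrm{Gr}_d(V)$, the tautological bundle, and the induced $\mathrm{Hom}$, tensor, and exterior-power bundles — is standard, and all relevant ranks and dimensions are locally constant once $d$ is fixed, so no gluing subtleties arise.
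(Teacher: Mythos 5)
Your proposal is correct and follows essentially the same route as the paper: stratify by $d=\dim W$, take $\mathrm{Gr}_d(V)\times\mathrm{Gr}_{\mathrm{Dirac}}(\mathfrak g)$ as the base, and realize $\mathcal S_d$ as $(\mathfrak W_d^*\otimes\mathfrak g)\oplus\Lambda^2\mathfrak W_d^*$ built from the (pulled back) tautological bundle. The only difference is that you supply a transversality argument for the smoothness of the Dirac Grassmannian (which checks out, including the dimension count $\binom{m}{2}$), where the paper simply invokes that it is a homogeneous real projective algebraic variety.
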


\begin{proof} 
The vector bundles $\mathcal{S}_d$ are obtained by fixing the dimension $d=\dim W$. 
The base of the vector bundle $\mathcal{S}_d$ is the homogeneous real projective algebraic
variety $\mathrm{Gr}_d(V)\times \mathrm{Gr}_{\mathrm{Dirac}}(\mathfrak g)$, 
where $\mathrm{Gr}_d(V)$ denotes the Grassmannian of subspaces $W\subset V$ of dimension $d$ 
and $\mathrm{Gr}_{\mathrm{Dirac}}(\mathfrak g)$ the Grassmannian of maximal isotropic subspaces $\mathcal D \subset \mathfrak g$. 
%The fiber of the vector bundle $\mathcal{S}_d$ over a point $(W,\mathcal D)\in \mathrm{Gr}_d(V)\times \mathrm{Gr}_{\mathrm{Dirac}}(\mathfrak g)$ 
%is $\mathrm{Hom}(W,\mathfrak g) \oplus \Lambda^2W^*$. 
Denoting by $\mathfrak W_d$ the pullback of the 
universal bundle of  $\mathrm{Gr}_d(V)$  via the canonical projection $\mathrm{Gr}_d(V)\times \mathrm{Gr}_{\mathrm{Dirac}}(\mathfrak g) \to \mathrm{Gr}_d(V)$ 
we see that 
\[ \mathcal S_d = (\mathfrak W_d^* \otimes \mathfrak{g}) \oplus \Lambda^2 \mathfrak W_d^*. \qedhere\]
\end{proof}
 Let us denote by $\mathfrak D$ the pull back of the universal bundle of $\mathrm{Gr}_{\mathrm{Dirac}}(\mathfrak g)$ 
 via the projection  $\mathrm{Gr}_d(V)\times \mathrm{Gr}_{\mathrm{Dirac}}(\mathfrak g) \to \mathrm{Gr}_{\mathrm{Dirac}}(\mathfrak g)$.

\begin{prop} \label{action_on_Sd:prop}The vector bundle $\mathrm{Hom}(\mathfrak W_d, \mathfrak D)$, considered as a bundle of vector groups, 
acts fiberwise and freely on the vector bundle  $\mathcal{S}_d$ by affine transformations $T_\gamma$, $\gamma \in 
\mathrm{Hom}(\mathfrak W_d, \mathfrak D)$. Denoting the base point of $\gamma$ by 
 $(W,\mathcal D)\in \mathrm{Gr}_d(V)\times \mathrm{Gr}_{\mathrm{Dirac}}(\mathfrak g)$ we have 
 $\gamma \in \mathrm{Hom}(W,\mathcal D)$ and  the action on 
 the fiber of $\mathcal S_d$ over $(W,\mathcal D)$ 
 is explicitly given by 
\begin{equation}\label{ADD}
T_\gamma (W,   \mathcal D, \sigma ,  \epsilon ) = (W,   \mathcal D ,\sigma +\gamma , \epsilon + \frac12\langle \sigma \wedge \gamma\rangle_\mathfrak g).
\end{equation}
The quotient under this action 
\[ \mathcal S_d/\mathrm{Hom}(\mathfrak W_d, \mathfrak D)\] 
is in natural bijection with the set $\mathfrak S_d$ of 
Dirac structures $L\subset V\oplus V^* \oplus \mathfrak g$ for which $\dim \pi_V(L)=d$.  
\end{prop}

\begin{proof}  We only prove the claim on  $\mathfrak S_d$. 
The other claims are straightforward.  We consider the surjective map 
\begin{equation}\label{ldmap:eq} \mathcal L_d : \mathcal S_d \to \mathfrak S_d,\quad (W,  \mathcal D, \sigma ,   \epsilon ) \mapsto L(W,  \mathcal D, \sigma ,   \epsilon )\end{equation} 
from Proposition \ref{vspace-max-iso}. Given $L=L(W,  \mathcal D ,  \sigma , \epsilon )\in \mathfrak S_d$, the 
data $W$ and $\mathcal D$ are completely determined by $L$. In fact, 
\begin{equation}\label{WD:eq}W=\pi_V(L)\quad\mbox{and}\quad \mathcal D = \pi_{\mathfrak{g}}(\ker \pi_V|_L).\end{equation}
Next we recall that  $\sigma  : W\rightarrow \mathfrak{g}$ is linear and, for any
$X\in W$, $\sigma (X) \in \mathcal D_{X} = \pi_{\mathfrak{g}} (\pi_{V}\vert_{L})^{-1}(X)$.
Any other map $\sigma_{1} : W \rightarrow \mathfrak{g}$ with this property is related to $\sigma$ by 
\begin{equation}\label{sigma1:eq}
\sigma_{1}  = \sigma +  \gamma ,
\end{equation} 
where $\gamma: W \rightarrow \mathcal D$ is linear.

Finally, expressing $L$ as $L=L(W, \mathcal D , \sigma_1, \epsilon_1)$ in terms of $\sigma_{1}$ instead of $\sigma$  we obtain that 
the corresponding bilinear form  $\epsilon_{1}'\in W^{*}\times W^{*}$ and 
its skew part $\epsilon_{1}  \in \Lambda^{2} W^{*}$ are related to the bilinear forms $\epsilon'$ and $\epsilon$ associated with $L$ and $\sigma$ 
by \begin{equation}\label{trafo:eq}
\epsilon_{1}' = \epsilon'  -  \langle \gamma , \sigma \rangle_{\mathfrak{g}},\ 
\epsilon_{1} = \epsilon  + \frac{1}{2} \langle \sigma \wedge \gamma \rangle_{\mathfrak{g}}.
\end{equation}
To see this it suffices to recall that $\epsilon'$ is defined by \eqref{epsprime:eq} where $\sigma_0=\sigma$ and $\epsilon_1'$ is defined similarly in terms of $\sigma_1$.  We also note that for any $X, Y\in W$, 
\begin{align}
\nonumber& \langle \gamma , \sigma\rangle_{\mathfrak{g}}   (X, Y) := \langle \gamma (X),\sigma (Y)\rangle_{\mathfrak{g}}\\
\nonumber& \langle \sigma \wedge \gamma\rangle_{\mathfrak{g}}(X, Y):= \langle \sigma  (X), \gamma (Y) \rangle_{\mathfrak{g}} 
- \langle \sigma (Y),  \gamma (X) \rangle_{\mathfrak{g}}.
\end{align}
Comparing \eqref{sigma1:eq} and \eqref{trafo:eq} with the definition of the affine action, we see that the map \eqref{ldmap:eq} 
induces a bijection 
\[  \mathcal S_d/\mathrm{Hom}(\mathfrak W_d, \mathfrak D) \to \mathfrak S_d.\qedhere\]
\end{proof}

\begin{cor} \label{complement_sigma:eq}The fiber $(\mathcal L_d)^{-1}(L)$ of the map  $\mathcal L_d : \mathcal S_d \to \mathfrak S_d$, $(W,\mathcal D ,   \sigma ,  \epsilon ) \mapsto L(W,  \mathcal D , \sigma ,   \epsilon )$ over a point $L=L(W,\mathcal D , \sigma ,  \epsilon)$ is an affine space modelled on 
$\mathrm{Hom}(W,\mathcal D)$. The choice of a complement $\mathcal C$ of $\mathcal D$ in $\mathfrak g$ uniquely
determines a point in that affine space by requiring the image of $\sigma$ to lie in $\mathcal C$.  
\end{cor}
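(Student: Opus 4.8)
The plan is to deduce this corollary directly from Proposition~\ref{action_on_Sd:prop}. That proposition states that $\mathcal L_d$ induces a bijection $\mathcal S_d/\mathrm{Hom}(\mathfrak W_d,\mathfrak D)\to\mathfrak S_d$, which is exactly the assertion that, for $L=L(W,\sigma,\mathcal D,\epsilon)$, the fiber $(\mathcal L_d)^{-1}(L)$ coincides with the $\mathrm{Hom}(\mathfrak W_d,\mathfrak D)$-orbit through $(W,\sigma,\mathcal D,\epsilon)$. Since the group acts fiberwise over $\mathrm{Gr}_d(V)\times\mathrm{Gr}_{\mathrm{Dirac}}(\mathfrak g)$, this orbit is just the orbit of the vector group $\mathrm{Hom}(W,\mathcal D)$ --- the fiber of $\mathrm{Hom}(\mathfrak W_d,\mathfrak D)$ over $(W,\mathcal D)$ --- acting by the affine maps $T_\gamma$. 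A free affine action of a vector group turns each of its orbits into an affine space modelled on that group, which gives the first assertion.

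For the second assertion I would fix a complement $\mathcal C$ of $\mathcal D$ in $\mathfrak g$ and let $p_{\mathcal C}\colon\mathfrak g\to\mathcal C$ and $p_{\mathcal D}\colon\mathfrak g\to\mathcal D$ be the projections associated with $\mathfrak g=\mathcal C\oplus\mathcal D$. Starting from an arbitrary representative $(W,\sigma,\mathcal D,\epsilon)$ of the fiber, the goal is to find $\gamma\in\mathrm{Hom}(W,\mathcal D)$ such that $T_\gamma(W,\sigma,\mathcal D,\epsilon)$ has $\sigma$-component with image in $\mathcal C$; by the formula $T_\gamma(W,\sigma,\mathcal D,\epsilon)=(W,\sigma+\gamma,\mathcal D,\epsilon-\tfrac{1}{2}\langle\gamma\wedge\sigma\rangle_{\mathfrak g})$ this means $\mathrm{im}(\sigma+\gamma)\subset\mathcal C$. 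Applying $p_{\mathcal D}$ and using $\gamma(X)\in\mathcal D$, this is equivalent to $p_{\mathcal D}\circ\sigma+\gamma=0$, i.e.\ $\gamma=-p_{\mathcal D}\circ\sigma$, which indeed lies in $\mathrm{Hom}(W,\mathcal D)$; the resulting canonical point of the fiber has $\sigma$-component $p_{\mathcal C}\circ\sigma$. Uniqueness of such a point is then automatic since the action is free.

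There is essentially no obstacle here. The only thing deserving care is to verify that the recipe does not depend on the chosen representative of the fiber: two representatives differ by some $T_{\gamma_0}$ with $\gamma_0\in\mathrm{Hom}(W,\mathcal D)$, and since $\gamma_0$ takes values in $\mathcal D$ one has $p_{\mathcal C}\circ(\sigma+\gamma_0)=p_{\mathcal C}\circ\sigma$, so the canonical point produced is the same; the value of its $\epsilon$-component is then pinned down by the formula for $T_\gamma$.
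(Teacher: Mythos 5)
Your proof is correct and follows essentially the same route as the paper: identify the fiber with the free orbit of the vector group $\mathrm{Hom}(W,\mathcal D)$ acting by the affine maps $T_\gamma$ (hence an affine space modelled on it), and then normalize $\sigma$ via the uniquely determined $\gamma=-p_{\mathcal D}\circ\sigma$. The extra check that the recipe is independent of the representative is a harmless elaboration of the paper's uniqueness claim.
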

\begin{proof} From Proposition \ref{action_on_Sd:prop} we see that $(\mathcal L_d)^{-1}(L)$ is precisely the orbit 
of $(W, \mathcal D , \sigma , \epsilon)$ under the affine action of $\mathrm{Hom}(W,\mathcal D)$. 
Since the action is free, the orbit is an affine space, i.e.\ a set with a simply transitive action of a vector group. 
Now let $\mathcal C$ be a complement of $\mathcal D$ in  $\mathfrak{g}$. 
Using the affine action we can modify $\sigma$ by addition of a uniquely determined element $\gamma \in \mathrm{Hom}(W,\mathcal D)$ 
to ensure that it has image in $\mathcal C.$ We conclude that the resulting tuple 
$(W,\mathcal D, \sigma +\gamma ,  \epsilon  +  \frac{1}{2} \langle \sigma \wedge \gamma \rangle_{\mathfrak{g}})\in (\mathcal L_d)^{-1}(L)$ is uniquely determined by $L$ and the choice of $\mathcal C$. 
\end{proof}

\section{Dirac structures  of $(V \oplus V^{*} \oplus \mathfrak{g})^{\mathbb{C}}$}\label{dirac-complex-section}

The arguments from the previous section 
remain true for Dirac structures  (i.e.\  maximal  isotropic  complex subspaces)  $L$ of $(V \oplus V^{*} \oplus \mathfrak{g})^{\mathbb{C}}$
with scalar product the complexification  of $\langle \cdot , \cdot  \rangle$.  More precisely,  
$L$  can be described in terms of 
a complex subspace $W\subset V^{\mathbb{C}}$, a Dirac structure 
$ \mathcal D$ of $ (\mathfrak{g}^{\mathbb{C}}, \langle \cdot , \cdot \rangle_{\mathfrak{g}})$, 
a complex linear map $\sigma : W  \rightarrow \mathfrak{g}^{\mathbb{C}}$, 
and a  complex $2$-form  
$\epsilon\in \Lambda^{2} W^{*}$,  as in Proposition~\ref{vspace-max-iso},  and  Propositions  \ref{bundle}, \ref{action_on_Sd:prop} and 
Corollary \ref{complement_sigma:eq}  remain true in this setting. Recall  that  $L$ has {\cmssl real index zero} if $L\cap \bar{L} = \{ 0 \}$, or 
$$(L\cap \bar{L} )_{\mathbb{R}}:= L\cap \bar{L} \cap (V\oplus V^{*}\oplus \mathfrak{g}) 
= \{ 0 \} .$$
In this case 
$L$ defines a skew-symmetric complex structure $J=J_L$ on $V \oplus V^{*} \oplus \mathfrak{g}$ with the eigenspaces 
$\mathrm{Eig}(J,i)=L$ and $\mathrm{Eig}(J,-i)=\bar L$. We denote by  $\mathrm{Re}\,  \sigma : W\rightarrow \mathfrak{g}$  and $\mathrm{Im}\,  \sigma : W\rightarrow \mathfrak{g}$  the maps 
\begin{align}
\nonumber& \mathrm{Re}\,   \sigma (X)  :=\frac{1}{2} ( \sigma (X)  +\overline{\sigma (X)}),\\
\nonumber&  \mathrm{Im}\,   \sigma (X)  :=\frac{1}{2\sqrt{-1} } ( \sigma (X)  - \overline{\sigma (X)}).
\end{align}
We define  $\bar{\sigma} : \bar{W} \rightarrow \mathfrak g^\mathbb{C}$  by
$\bar{\sigma} (X):= \overline { \sigma (\bar{X})}$ and 
$\bar{\epsilon}\in \Lambda^{2} \bar{W}^{*}$  
by $\bar{\epsilon} (X, Y) := \overline{\epsilon (\bar{X}, \bar{Y})}$.
Let
$\Delta : = (W \cap \bar{W})_{\mathbb{R}} = W \cap \bar{W}\cap V.$

\begin{lem}\label{index-zero} The Dirac structure $L = L(W, \mathcal D ,  \sigma , \epsilon )$ has real index zero  if and only if  the next conditions hold:\

A)  $W + \bar{W} = V^{\mathbb{C}}$;\

B)  $\mathcal D \cap \mathfrak{g} \cap  [ (\mathrm{Im}\,  \sigma )({\Delta} )]^{\perp} =\{ 0\}$;\

C) for any $X\in \Delta\setminus \{ 0\}$ and $r\in \mathcal D$ such that $ r -\bar{r} = \bar{\sigma}(X) -\sigma (X)$, there is
$Y\in \Delta$ such that
\begin{equation}
\mathrm{Im}\, (\epsilon (X, Y) ) 
-\frac{1}{2} \langle \mathrm{Re}\, \sigma  \wedge \mathrm{Im}\, \sigma \rangle_{\mathfrak{g}} (X, Y)
-\frac{1}{2} \langle r + \bar{r}, \mathrm{Im}\, \sigma (Y) \rangle_{\mathfrak{g}} \neq 0.
\end{equation}
\end{lem} 

\begin{proof} Projecting the equality  $L\oplus \bar{L} = ( V\oplus V^{*} \oplus \mathfrak{g})^{\mathbb{C}}$
onto $V^{\mathbb{C}}$ we obtain that A) holds when $L$ has real index zero.  We assume  from now on that 
A) holds. From the definition of $L$,  $X+ \xi +\sigma (X) + r\in ( L\cap \bar{L})_{\mathbb{R}}$ if and only if 
$X\in \Delta$,   $\xi \in V^{*}$ (i.e. $\xi\vert_{V}$ takes real values),  $r\in \mathcal D$, 
\begin{equation}\label{sigma0-xz}
\sigma (X) + r = \bar{\sigma}(X) + \bar{r},
\end{equation}
and 
\begin{align}
\nonumber& \xi (Y) = 2 {\epsilon}  (X, Y) -\langle {\sigma}(Y) ,  {\sigma}(X) + 2r\rangle_{\mathfrak{g}},\ \forall Y\in  {W} \\
\label{bar-z-prima} &\xi (Y) = 2 \bar{\epsilon}  (X, Y) -\langle \bar{\sigma}(Y) ,  \bar{\sigma}(X) + 2\bar{r}\rangle_{\mathfrak{g}},\ \forall Y\in  \bar{W} .
\end{align}
Using   $W + \bar{W} = V^{\mathbb{C}}$ we obtain from relations (\ref{bar-z-prima})  that  
$\xi$  is uniquely  determined by  $X$ and $r$. Conversely,  given  $X\in \Delta$ and $r\in \mathcal D$
such that  (\ref{sigma0-xz}) holds,  there is $\xi\in V^{*}$ 
such that  $X+ \xi +\sigma (X) + r\in ( L\cap \bar{L})_{\mathbb{R}}$ 
if and only if
\begin{equation}\label{im}
\mathrm{Im}\, ( \epsilon (X, Y)) -\frac{1}{2} \langle \mathrm{Re}\, \sigma  \wedge \mathrm{Im}\, \sigma  \rangle_{\mathfrak{g}}  (X, Y)
-\frac{1}{2} \langle r + \bar{r}, \mathrm{Im}\, \sigma (Y) \rangle_{\mathfrak{g}} =0,
\end{equation}
for all $Y\in \Delta$. 
We obtain that 
$(L\cap \bar{L})_{\mathbb{R}}$ contains no  non-zero vector 
of the form $\xi + r$ if and only if  B) holds and it  contains no vector $X +\xi + \sigma (X) + r$ with $X\neq 0$ if and only if C) holds. 
\end{proof}

Let 
$$
\mathcal B := \{ v_{1}, \ldots , v_{p}, w_{1}, \ldots , w_{q},   {v}^{\prime}_{1}, \ldots , {v}^{\prime}_{p}, \tau (w_{1}), \ldots , \tau (w_{q}) \}
$$
be a basis of $\mathfrak{g}^{\mathbb{C}}$  adapted to $\mathcal D$ 
(see Proposition \ref{max-iso-chevalley}) and $\mathcal C$ the complement of $\mathcal D$ 
 spanned by the vectors $\{ {v}^{\prime}_{i}$, $\tau (w_{j})\}$.
Choose   $\sigma : W \rightarrow\mathcal C$ and write it as
\begin{equation}
\sigma (X) = \sum_{i=1}^{p} \lambda_{i}(X) {v}^{\prime}_{i} +\sum_{j=1}^{q} \beta_{j} (X) \tau( w_{j}),\ \forall X\in W
\end{equation}
where $\lambda_{i}, \beta_{j} \in W^{*}.$   
 
 \begin{prop} \label{basis}    The Dirac structure $L$ has real index zero  if and only if 
 $W + \bar{W} = V^{\mathbb{C}}$ and one of the following situations holds:
 
 i) either $\Delta  =0$ and $\mathcal D  =  \mathrm{span}_{\mathbb{C}} \{ w_{1}, \ldots , w_{q}\}$ (that is,  $\mathcal D\cap \mathfrak{g}= \{ 0\}$);

 ii) or  $\Delta \neq 0$,  $\{  \mathrm{Im}\,  (\lambda_{i}\vert_{\Delta}) \mid 1\leq i\leq p \}$ are linearly independent and 
 \begin{equation}\label{omega:eq}
 \omega := \mathrm{Im}\, \epsilon  +\frac{1}{2\sqrt{-1}}\sum_{j} \epsilon _{j} \beta_{j} \wedge \bar{\beta}_{j}
 \in (\Lambda^2 (W\cap \bar W)^*)_\mathbb{R} = \Lambda^2 \Delta^*
 \end{equation}
 is non-degenerate on 
 $$
 \Delta_0 :=  \{ X\in \Delta \mid  \mathrm{Im}\, \lambda_{i} (X) =0 \} .
 $$
In particular, the dimension  of $\Delta_{0}$ is even. (Here and in the proof we write $\sqrt{-1}$ instead of $i$ to avoid a conflict of
notation with the index $i$.)
 \end{prop}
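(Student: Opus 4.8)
The plan is to derive the statement directly from Corollary~\ref{index-zero} by translating its three conditions A), B), C) into the chosen basis $\mathcal B$ of $\mathfrak g^{\mathbb C}$. Condition A) is literally $W+\bar W=V^{\mathbb C}$, so that part carries over verbatim and the work lies entirely in B) and C). The common computational input is the expansion of $\sigma$ in the complement $\mathcal C$: starting from $\sigma(X)=\sum_i\lambda_i(X)\tilde v_i+\sum_j\beta_j(X)\tau(w_j)$, and using $\overline{v_i}=v_i$, $\overline{\tilde v_i}=\tilde v_i$, $\overline{\tau(w_j)}=w_j$ together with the normalisation $\langle v_i,\tilde v_j\rangle_{\mathfrak g}=\delta_{ij}$, $\langle w_i,\tau(w_j)\rangle_{\mathfrak g}=\epsilon_i\delta_{ij}$ (all other basic pairings vanishing), I would first record explicit formulas for $\mathrm{Re}\,\sigma$, $\mathrm{Im}\,\sigma$ and for their pairings against the basis vectors; in particular $\langle v_i,\mathrm{Im}\,\sigma(Y)\rangle_{\mathfrak g}=\mathrm{Im}\,\lambda_i(Y)$ for all $Y\in\Delta$.

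First I would handle B). Since $\mathcal D\cap\mathfrak g=\mathrm{span}_{\mathbb R}\{v_1,\dots,v_p\}$, a vector $v=\sum_i c_iv_i$ ($c_i\in\mathbb R$) lies in $[(\mathrm{Im}\,\sigma)(\Delta)]^{\perp}$ precisely when $\sum_i c_i\,\mathrm{Im}\,\lambda_i(Y)=0$ for all $Y\in\Delta$, so B) is equivalent to the linear independence of the covectors $\{\mathrm{Im}(\lambda_i|_{\Delta})\}_{i=1}^p$ in $\Delta^*$. If $\Delta=0$ this forces $p=0$, i.e.\ $\mathcal D=\mathrm{span}_{\mathbb C}\{w_1,\dots,w_q\}$, which (using that in general $\mathcal D\cap\bar{\mathcal D}=\mathrm{span}_{\mathbb C}\{v_1,\dots,v_p\}$) is the same as $\mathcal D\cap\bar{\mathcal D}=\{0\}$; since condition C) is vacuous when $\Delta=0$, Corollary~\ref{index-zero} then yields case i). From now on I would assume $\Delta\neq0$, so that B) is exactly the linear-independence condition in ii), and I would note that under B) the span of $\{\mathrm{Im}(\lambda_i|_{\Delta})\}_{i=1}^p$ equals the annihilator in $\Delta^*$ of $\Delta_0=\bigcap_i\ker(\mathrm{Im}\,\lambda_i|_{\Delta})$, of dimension $p=\dim\Delta-\dim\Delta_0$.

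Next I would unwind C). For real $X\in\Delta$ one has $\bar\sigma(X)-\sigma(X)=-2\sqrt{-1}\,\mathrm{Im}\,\sigma(X)$; writing $r=\sum_i a_iv_i+\sum_j b_jw_j\in\mathcal D$ and matching coefficients in $\mathcal B$, the equation $r-\bar r=\bar\sigma(X)-\sigma(X)$ holds if and only if $X\in\Delta_0$, $a_i\in\mathbb R$ (otherwise unconstrained) and $b_j=\overline{\beta_j(X)}$. Hence C) only constrains $X\in\Delta_0\setminus\{0\}$, and for such $X$ the admissible $r$ form an affine family parametrised by $a=(a_i)\in\mathbb R^p$. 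The core computation is to evaluate, for $Y\in\Delta$, the left-hand side of the inequality in C) on this data; carrying out the pairings with the formulas above and using the identity $(\beta_j\wedge\bar\beta_j)(X,Y)=2\sqrt{-1}\,\mathrm{Im}(\beta_j(X)\overline{\beta_j(Y)})$, I expect the three summands to collapse to $\omega(X,Y)-\sum_i a_i\,\mathrm{Im}\,\lambda_i(Y)$ with $\omega$ the $2$-form of~\eqref{omega:eq} — a computation which simultaneously shows $\omega$ is real-valued on $\Delta$, hence lies in $\Lambda^2\Delta^*$. Since, as $a$ ranges over $\mathbb R^p$, the functional $Y\mapsto\sum_i a_i\,\mathrm{Im}\,\lambda_i(Y)$ ranges over the full annihilator of $\Delta_0$ in $\Delta^*$, condition C) then says exactly that for every $X\in\Delta_0\setminus\{0\}$ the functional $\omega(X,\cdot)$ does not vanish identically on $\Delta_0$, i.e.\ $\omega|_{\Delta_0}$ is non-degenerate, which also forces $\dim\Delta_0$ to be even. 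Reading Corollary~\ref{index-zero} in both directions gives the asserted equivalence; for the converse direction one uses that for $Y\in\Delta_0$ the correction term $\sum_i a_i\,\mathrm{Im}\,\lambda_i(Y)$ drops out, so non-degeneracy of $\omega|_{\Delta_0}$ directly produces the required $Y$. I expect the bookkeeping in this last pairing computation to be the only real obstacle; all remaining steps are formal.
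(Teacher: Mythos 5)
Your proposal is correct and follows essentially the same route as the paper: expand $\sigma$ in the adapted basis, reduce condition B) of Corollary~\ref{index-zero} to the linear independence of the $\mathrm{Im}(\lambda_i|_\Delta)$, characterize the admissible pairs $(X,r)$ in condition C) (forcing $X\in\Delta_0$, $a_i$ real, $b_j=\overline{\beta_j(X)}$), and collapse the three terms to $\omega(X,Y)-\sum_i a_i\,\mathrm{Im}\,\lambda_i(Y)$, exactly as in the paper. The only (immaterial) difference is at the very end, where you pass to the quotient $\Delta^*/\mathrm{Ann}(\Delta_0)\cong\Delta_0^*$ while the paper phrases the same conclusion via a dimension count and the decomposition $\Delta=(\Delta_0)^{\perp_\omega}\oplus\Delta_0$.
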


 \begin{proof}    i) When  $\Delta =\{ 0\}$, B) and C)  from   Corollary \ref{index-zero} reduce to
$ \mathcal D \cap \mathfrak{g} =\{0\}.$

 ii) Assume  that  $\Delta \neq \{ 0\}.$ Then,  for any $X\in \Delta$, 
 \begin{align}
\nonumber&  \mathrm{Re}\, \sigma (X) = \sum_{i} \mathrm{Re}\,  \lambda_{i} (X) {v}^{\prime}_{i} +\frac{1}{2} \sum_{j} ( \beta_{j} (X) 
 \tau (w_{j}) +\bar{\beta}_{j} (X) w_{j})\\
\label{re-im} & \mathrm{Im}\, \sigma (X) = \sum_{i} \mathrm{Im}\, \lambda_{i} (X) {v}^{\prime}_{i} +\frac{1}{2\sqrt{-1}} \sum_{j} ( \beta_{j}(X) \tau
 ( w_{j}) -\bar{\beta}_{j}(X) w_{j}). 
 \end{align}
 We deduce that
 \begin{equation}
\langle  \mathrm{Re}\, \sigma \wedge \mathrm{Im}\, \sigma \rangle_{\mathfrak{g}} =\frac{1}{2\sqrt{-1}} \sum_{j} \epsilon_{j} \bar{\beta}_{j}\wedge \beta_{j}.
 \end{equation}
 Using the second relation  in (\ref{re-im}), we see that $v= \sum_{i} a_{i} v_{i} \in
 \mathcal D \cap \mathfrak{g}$ is orthogonal to $\mathrm{Im}\, \sigma (X) $  if and only if
 $\sum_{i}  a_{i} \mathrm{Im}\, \lambda_{i} (X) =0$.  It follows that condition B) from
 Corollary \ref{index-zero} is equivalent to the linear independence of 
 $\{  (\mathrm{Im}\,  (\lambda_{i} \vert_{\Delta}) \mid 1\leq i\leq p\}$.  Let us consider now condition C) from Corollary~\ref{index-zero}. 
 It is straightforward to see that a vector 
 $$
 r= \sum_{i} z_{i}v_{i} + \sum_{j} z^{\prime}_{j} w_{j} \in \mathcal D 
 $$
 (where $z_{i}, z_{j}^{\prime} \in \mathbb{C}$)
is related to $X\in \Delta\setminus \{ 0\} $ by  $\bar{r} - r =\sigma (X) -  \bar{\sigma}(X) $ if and only if
 \begin{equation}
 z_{i} = \bar{z}_{i},\ \mathrm{Im}\,  \lambda_{i} (X)=0,\ z^{\prime}_{j} =\bar{\beta}_{j} (X).
 \end{equation}
 In particular, $X\in \Delta_{0}.$ 
 Moreover, if  the above relation is true then
 \begin{equation}
 \langle r +\bar{r} , \mathrm{Im}\, \sigma (Y) \rangle_{\mathfrak{g}} =  2\sum_{i} z_{i} \mathrm{Im}\, \lambda_{i}(Y) +\frac{1}{2\sqrt{-1}}
 \sum_{j} \epsilon_{j}  (\bar{\beta}_{j}\wedge \beta_{j}) (X, Y),
 \end{equation}
 for any  $Y\in \Delta$. 
 Condition C)  from Corollary \ref{index-zero} 
 becomes: for any $X\in \Delta_{0} \setminus \{ 0\}$ and $z_{i}\in \mathbb{R}$ ($i=1,\ldots, p$),
 there is $Y\in \Delta \setminus \{ 0\}$ such that
  \begin{equation}\label{translate}
 \mathrm{Im}\, ( \epsilon (X, Y)) +   \frac{1}{2\sqrt{-1}}\sum_{j} \epsilon_{j}   ({\beta}_{j} \wedge \bar{\beta}_{j} )(X, Y) -
 \sum_{i} z_{i}\mathrm{Im}\, \lambda_{i} (Y)\neq 0.
 \end{equation}
 Recall the definition \eqref{omega:eq} of $\omega$. 
 Relation (\ref{translate}) shows that the map
 $$
 \omega : \Delta_{0} \rightarrow \Delta^{*},\ X \mapsto i_{X}\omega 
 $$ 
 is injective 
 (take $z_{i} :=0$ for all  $i$) and that
 \begin{equation}
 \omega (\Delta_{0}) \cap \mathrm{span}_{\mathbb{R}} \{ \mathrm{Im}\, (\lambda_{i}\vert_{\Delta}) \} = \{ 0\} .
 \end{equation}
 We obtain that  $\Delta^{*}$ decomposes into a direct sum of 
 the ($\mathrm{dim}\, \Delta - p$)-dimensional vector space 
 $\omega (\Delta_{0} )$ and  the $p$-dimensional vector space 
 $\mathrm{span}_{\mathbb{R}} \{ \mathrm{Im}\, (\lambda_{i} \vert_{\Delta})\}$. It follows that
 $$
 \Delta =  (\Delta_{0})^{\perp_\omega} \oplus \Delta_{0},
 $$
 where $(\Delta_{0})^{\perp_\omega}$ denotes  the set of vectors 
 $v\in \Delta$ which are $\omega$-orthogonal to $\Delta_{0}.$ 
 We obtain that 
 $\omega\vert_{\Delta_{0}\times \Delta_{0}} $ is non-degenerate.  
  \end{proof}

\section{The $(1,0)$-bundle of  a generalized complex structure} \label{bundle-section}

\subsection{Integrability of generalized complex structures}

Let $\mathcal J$ be a generalized almost complex structure on a  standard Courant algebroid $E = TM\oplus T^{*}M\oplus \mathcal G$  
with $(1,0)$-bundle $L$.
Then $L$  is a maximal isotropic subbundle of $E^{\mathbb{C}}$ of real index zero.
Applying the arguments from the previous sections pointwise we obtain that $L$ 
can be described
as in Proposition \ref{vspace-max-iso}
in terms of a data 
$(W,\mathcal D ,  \sigma,  \epsilon)$,  where, at any $x\in M$,  
$W_{x}\subset (T_{x}M)^{\mathbb{C}}$ is a vector subspace, 
$\mathcal D_{x}\subset \mathcal G_{x}^{\mathbb{C}}$ is a maximal isotropic subspace,
$\sigma_{x} : W_{x} \rightarrow \mathcal G_{x}^{\mathbb{C}}$ is linear  and $\epsilon_{x} 
\in \Lambda^{2} W^{*}_{x}$.  The data $(W, \mathcal D , \sigma,  \epsilon)$ 
satisfies  pointwise the conditions from  Lemma \ref{index-zero} and  Proposition \ref{basis}.
 
The smoothness of the data $(W ,\mathcal D , \sigma,  \epsilon)$ is ensured by the following result. 

 \begin{prop}\label{prop-smooth} $W=\pi_{(TM)^\mathbb{C}}(L)$ is of constant rank if and only if $W\cap \bar W$ is. In that case,  $W\subset (TM)^{\mathbb{C}}$,  
 $W\cap \bar W$  
 and 
$\mathcal D\subset\mathcal G^{\mathbb{C}}$   are smooth subbundles 
 and
 the 
possibly discontinuous sections 
 $\sigma, \epsilon$ of $\mathrm{Hom}(W,\mathcal G^{\mathbb{C}})$ and $\Lambda^2W^*$  respectively, can be chosen to be smooth. 
\end{prop}
\begin{proof} The first claim follows from 
$\mathrm{dim}\, (\Delta_{x}) = 2 \mathrm{dim}_{\mathbb{C}} (W_{x}) - \mathrm{dim}\, (M)$, see  \ Lemma \ref{index-zero}  A).
For any $x_0\in M$ there exist $\dim W_{x_0}$ smooth sections of $L$ projecting to a basis of $W_{x_0}$. Since $W$ has constant rank,  the projections of those sections form a smooth frame of $W$ in a neighborhood of $x_0$. This proves that 
$W\subset (TM)^{\mathbb{C}}$ and  $\ker \pi_{(TM)^\mathbb{C}}|_L\subset L$  are smooth subbundles and 
$\mathrm{rank} (\ker \pi_{(TM)^\mathbb{C}}|_L)=\mathrm{rank}\, L - \mathrm{rank}\, W$.  The smoothness of $W\cap \bar W$ follows easily from the fact that 
$W, \bar W\subset (TM)^{\mathbb{C}}$ are smooth subbundles with intersection of constant rank.
%To show smoothness of $W\cap \bar W$, let $s_i\in \Gamma (W)$ be such that the vectors $(s_i+\bar s_i)_{x_0}$ form a basis of $(W\cap \bar W)_{x_0}$. Then the same holds in a neighborhood of $x_0\in M$ because $W\cap \bar W$ is of constant rank.   
Since  
$\mathcal D_{x}$ is a maximal isotropic subspace of $(\mathcal G_{x})^{\mathbb{C}}$, its dimension is independent of $x\in M$ and a similar argument shows that 
$\mathcal D =   \pi_{\mathcal G^\mathbb{C}}(\ker \pi_{(TM)^\mathbb{C}} |_L)$ is a smooth  subbundle of $\mathcal G^{\mathbb{C}}$.
If we choose a subbundle $\mathcal K\subset L$ complementary to $\ker \pi_{(TM)^\mathbb{C}}|_L$ and define 
$\sigma :=\pi_{\mathcal G}^{\mathbb{C}} \circ (\pi_{(TM)^\mathbb{C}}|_{\mathcal K})^{-1}$, then $\sigma$ is smooth. 
The smoothness of 
$\epsilon \in \Gamma ( \Lambda^{2} W^{*})$ follows from the way it is determined by the smooth data $\sigma$ and $L$, 
see the proof of Proposition~\ref{vspace-max-iso}.
\end{proof}

\begin{ass}{\rm \label{Wsmooth:ass}
From now on    we assume that $W$ is of constant rank  and that $\sigma$ and $\epsilon$ are smooth. 
}
\end{ass}

\begin{rem}{\rm In view of Corollary \ref{complement_sigma:eq}, choosing a subbundle $\mathcal C \subset \mathcal G^\mathbb{C}$ complementary to $\mathcal D$
we can modify any smooth section $\sigma$ of $\mathrm{Hom}(W,\mathcal G^\mathbb{C})$ by the affine action 
described in Proposition~\ref{action_on_Sd:prop} 
such as to take values 
in $\mathcal C$. It suffices to apply $T_{\gamma}$ for $\gamma = -\pi_{\mathcal D} \circ \sigma$. Here $\pi_{\mathcal D} :
\mathcal G^{\mathbb{C}}  = \mathcal D \oplus \mathcal C\to \mathcal D$ stands for the projection defined by $\mathcal C$. The resulting
section   $\sigma + \gamma$  is again smooth. 
}
\end{rem}

\begin{prop}\label{conditii-integr}  
Let  $[\cdot  , \cdot ]_{\mathcal G}$ and $\langle \cdot , \cdot \rangle_{\mathcal G}$ be the Lie bracket and scalar product of 
the quadratic Lie algebra bundle  $\mathcal G$ of $E$ and 
$(\nabla , R , H)$  the defining data for $E$. 
A  generalized almost complex structure $\mathcal J$ on $E$
with $(1,0)$-bundle $L = L (W,\mathcal D ,  \sigma,  \epsilon)$ 
 is integrable if and only if the following conditions 
hold:\

A)  $W$ is an involutive distribution on $M$;\

B) $\mathcal D$ is a Lagrangian subbundle  of $\mathcal G^{\mathbb{C}}$ (i.e. $\mathcal D_{x}\subset \mathcal G^{\mathbb{C}}_{x}$ is a Lagrangian subalgebra,
for any $x\in M$);

C) the partial connection on $\mathcal G^{\mathbb{C}}$ 
$$
(\nabla + \mathrm{ad}_{\sigma})_{X} r:= \nabla_{X} r + [\sigma(X), r]_{\mathcal G},\ \forall X\in \Gamma (W),\ r\in 
\Gamma (\mathcal G^{\mathbb{C}}), 
$$
preserves $\mathcal D$;\

D) for any $X, Y \in W$, 
\begin{equation}\label{r}
R(X, Y) +  (d^{\nabla} \sigma )(X, Y) + [ \sigma(X), \sigma (Y) ]_{\mathcal G } \in \mathcal D;
\end{equation}
E) on $\Lambda^{3}W$,
\begin{equation}\label{d-epsilon}
 H + 2 d\epsilon  +2 \langle R\wedge \sigma \rangle_{\mathcal G} +  \langle  (d^{\nabla}\sigma )\wedge \sigma \rangle_{\mathcal{G}} + 
  2 \langle [\sigma, \sigma ], \sigma \rangle_{\mathcal G} =0,
\end{equation}
where 
$$
\langle [\sigma, \sigma ], \sigma \rangle_{\mathcal G}(X, Y, Z) := \langle [\sigma (X), \sigma(Y) ]_{\mathcal G}, \sigma(Z) \rangle_{\mathcal G},\ \forall X, Y, Z\in W.
$$
\end{prop}

\begin{proof} From relations (\ref{dorfman1}) and (\ref{dorfman2}),   for any sections $u = X+\xi +\sigma (X) + r$ and 
$v= Y+\eta +\sigma (Y) +s $ of $L$,  the projections of the Dorfman bracket $[u, v]$ onto the components of $E^{\mathbb{C}}$ are  given by 
\begin{align*}
\nonumber \pi_{(TM)^{\mathbb{C}}}  [u, v] &= \mathcal L_{X}Y\\
\nonumber \pi_{(T^{*}M)^{\mathbb{C}}}  [u, v]& = 
i_{Y} i_{X} H +\mathcal L_{X} \eta   -  i_{Y} d\xi- 2\langle i_{X}R, \sigma (Y) +s\rangle \\
\nonumber& + 2\langle i_{Y} R, \sigma (X) +r\rangle + 2\langle \nabla ( \sigma (X) + r),
\sigma(Y) +s \rangle\\
\nonumber   \pi_{\mathcal G^{\mathbb{C}}} [u, v]&= R(X, Y) +\nabla_{X} (\sigma (Y) +s ) -\nabla_{Y} (\sigma(X) + r)\\
\nonumber& + [\sigma (X) + r, \sigma(Y) +s]_{\mathcal G},
\end{align*}
where, in order  to simplify notation, we write  $\langle \cdot , \cdot \rangle$ instead of $\langle \cdot , \cdot \rangle_{\mathcal G}$.
It follows  that $L$ is integrable if and only if $W$ is involutive,  for any $X, Y\in \Gamma (W)$,
\begin{align}
\nonumber& R(X, Y) +\nabla_{X} ( \sigma (Y) +s) -\nabla_{Y} ( \sigma(X) + r)\\
\label{cond-1}&  + [\sigma (X) + r, \sigma (Y) +s]_{\mathcal G} -\sigma ( \mathcal L_{X} Y ) := r_{1}  \in \Gamma (\mathcal D)
\end{align}
and, for any $V\in \Gamma ( W)$,
\begin{align}
\nonumber& (i_{Y} i_{X} H)(V) + (\mathcal L_{X}\eta )(V) -  (i_{Y}d\xi )(V)  - 2\langle R(X, V), \sigma (Y) +s \rangle \\
\nonumber& + 2\langle R(Y, V), \sigma (X) + r\rangle + 2\langle \nabla_{V} (\sigma (X) + r), \sigma (Y)+s \rangle\\
\label{cond-2}&  = 2 \epsilon ( \mathcal L_{X}Y, V) -\langle \sigma ({ \mathcal L}_{X} Y), \sigma (V) \rangle
 - 2 \langle r_{1}, \sigma (V)\rangle .
 \end{align}
 Relation (\ref{cond-1})  with $X= Y =0$ implies that 
 $\mathcal D$  a Lie algebra subbundle  (and,  being maximal isotropic, a Lagrangian subbundle) of $\mathcal G^\mathbb{C}$. Taking 
 $X=0$, $s=0$ and $r =s =0$ we obtain conditions C) and D). 
 Using the definition of  $r_{1}$,   by a calculation eliminating $\xi$ and $\eta$ we obtain  that  (\ref{cond-2}) is equivalent to 
\begin{align}
\nonumber&  H(X, Y, V) + 2d\epsilon (X, Y, V) + \langle  (d^{\nabla}\sigma) \wedge \sigma\rangle (X, Y, V) + 2\langle R\wedge \sigma \rangle (X, Y, V)\\
 \nonumber& - 2\langle s,  R( X, V)+   (d^{\nabla} \sigma )(X, V)+ [\sigma (X), \sigma (V) ]_{\mathcal G}  \rangle\\
 \nonumber& + 2 \langle {r},   R( Y, V)+  (d^{\nabla} \sigma )(Y, V)  + [\sigma (Y)), \sigma (V) ]_{\mathcal G}  \rangle\\
 \label{cond-3}&  + 2 \langle  \nabla_{V}r, s \rangle +  2\langle [ \sigma (X), \sigma (Y)]_{\mathcal G} +  [r, s ]_{\mathcal G}, \sigma (V) \rangle
  =0.
 \end{align}
 From  condition D), the second and third lines in (\ref{cond-3}) vanish, as $\mathcal D$ is isotropic. 
 Using  that $\langle \cdot , \cdot \rangle_{\mathcal  G}$ is $\mathrm{ad}$-invariant
 and condition C),  we obtain that
 \begin{equation}
 \langle \nabla_{V} r, s \rangle  + \langle [r, s ]_{\mathcal G},  \sigma (V)\rangle  =0,
  \end{equation}
  i.e.\ the last line in   (\ref{cond-3}) reduces to 
 $2\langle [ \sigma (X), \sigma (Y)]_{\mathcal G} , \sigma (V) \rangle$.   
  Relation (\ref{cond-3}) simplifies to 
(\ref{d-epsilon}). 
\end{proof}

\subsection{Generalized complex structures and equivalences}

Let  $I: E_{1} \rightarrow E_{2}$ be an isomorphism between standard Courant algebroids.  It
induces naturally a map between the sets of generalized complex structures of $E_{i}$ and, equivalently, 
a map $I_{\mathfrak S}:  {\mathfrak S}(E_{1}) \rightarrow \mathfrak{S} (E_{2}) $ 
between the sets  of involutive maximal  
isotropic subbundles $E_{i}^\mathbb{C}$ of real index zero. 
Let
\[ \mathcal L_{E_{i}} : \mathcal S(E_{i}) \to \mathfrak S(E_{i}),\quad (W, \mathcal D , \sigma ,  \epsilon )\mapsto L(W,\mathcal D , \sigma  , \epsilon),\] 
where $\mathcal S(E_{i})$ is the set of data defining involutive maximal isotropic subbundles $L\subset E_{i}^\mathbb{C}$ of real index zero.

\begin{prop}\label{lifted_action:prop} 
i) Define the map $T_I : \mathcal S(E_{1})  \to \mathcal S(E_{2}) $ by 
\[  (W,\mathcal D , \sigma ,  \epsilon ) \mapsto  (W,K \mathcal D ,
K\sigma +\Phi,  \epsilon +\frac12(\beta\vert_{W} + \langle \Phi \wedge K\sigma \rangle_{\mathcal G_{2}})),\]
where $(K,\Phi,\beta)$ are the data induced by $I$. 
Then  $T_I$ 
is  bijective and is a lift of $I_{\mathfrak{S}}$, i.e.\ it satisfies $\mathcal L_{E_{2}} \circ T_{I} = I_{\mathfrak S} \circ \mathcal L_{E_{1}}$. 
We  call  it  the  {\cmssl canonical lift  of $I_\mathfrak S$}.\

ii) When $E_{1} = E_{2}:= E$,   the map $I\mapsto T_I$ is an action of  $\mathrm{Aut}\, (E)$ on ${\mathcal S}(E)$
lifting the natural action of $\mathrm{Aut}\, (E)$ on $\mathfrak S(E)$.
\end{prop}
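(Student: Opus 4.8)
The plan is to verify directly that the tuple $T_I(W,\sigma,\mathcal D,\epsilon)$ produces the subbundle $I(L(W,\sigma,\mathcal D,\epsilon))$, and then deduce all remaining assertions from this identity together with the established structure theory. First I would recall that $I$ acts on $E_1^\mathbb{C}$ by the complex-linear extension of \eqref{def-iso}, so that for a section $u = X+\xi+\sigma(X)+r$ of $L = L(W,\sigma,\mathcal D,\epsilon)$ one computes $I(u) = X + (i_X\beta - \Phi^*\Phi(X) + \xi - 2\Phi^*K(r)) + (\Phi(X) + K(r))$. The $TM^\mathbb{C}$-component is unchanged, so the projection $\pi_{(TM)^\mathbb{C}}(I(L)) = W$; the $\mathcal G^\mathbb{C}$-component of the kernel of $\pi_{(TM)^\mathbb{C}}|_{I(L)}$ is $K\mathcal D$ (since $X=0$ forces the $\mathcal G$-part to be $K(r)$ with $r\in\mathcal D$); and reading off the map $X\mapsto$ ($\mathcal G$-component) along a complement gives $X\mapsto \Phi(X) + K\sigma(X)$, i.e.\ the new $\sigma$ is $K\sigma + \Phi$ — note this lands in $K\mathcal C + \mathrm{im}\,\Phi$, which is a legitimate choice of section, not necessarily image-in-a-fixed-complement. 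The only real computation is to check that the covector constraint defining $L(W,K\sigma+\Phi,K\mathcal D,\epsilon')$ matches the $T^*M^\mathbb{C}$-component coming out of $I$; this pins down $\epsilon'$, and I expect the bookkeeping with $\Phi^*$, the term $i_X\beta$, and the cross term $\langle\Phi(Y),K\sigma(X)\rangle_{\mathcal G_2}$ to reproduce exactly $\epsilon + \tfrac12(\beta|_W + \langle\Phi\wedge K\sigma\rangle_{\mathcal G_2})$.

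Concretely, for $Y\in W$ the $T^*M^\mathbb{C}$-component of $I(u)$, evaluated on $Y$, is $\beta(X,Y) - \langle\Phi(X),\Phi(Y)\rangle_{\mathcal G_2} + \xi(Y) - 2\langle K(r),\Phi(Y)\rangle_{\mathcal G_2}$. Substituting $\xi(Y) = 2\epsilon(X,Y) - \langle\sigma(Y),\sigma(X)+2r\rangle_{\mathcal G_1}$ and using that $K$ is an isometry of the scalar products, I would regroup the result into the canonical form $2\epsilon'(X,Y) - \langle(K\sigma+\Phi)(Y),(K\sigma+\Phi)(X) + 2K(r)\rangle_{\mathcal G_2}$. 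Expanding the target: $\langle(K\sigma+\Phi)(Y),(K\sigma+\Phi)(X)\rangle_{\mathcal G_2} = \langle\sigma(Y),\sigma(X)\rangle_{\mathcal G_1} + \langle K\sigma(Y),\Phi(X)\rangle_{\mathcal G_2} + \langle\Phi(Y),K\sigma(X)\rangle_{\mathcal G_2} + \langle\Phi(Y),\Phi(X)\rangle_{\mathcal G_2}$, and $2\langle(K\sigma+\Phi)(Y),K(r)\rangle_{\mathcal G_2} = 2\langle\sigma(Y),r\rangle_{\mathcal G_1} + 2\langle\Phi(Y),K(r)\rangle_{\mathcal G_2}$. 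Matching both sides and isolating $\epsilon'$ yields $2\epsilon'(X,Y) = 2\epsilon(X,Y) + \beta(X,Y) - \beta(Y,X)_{\text{(implicitly)}} + \langle K\sigma(Y),\Phi(X)\rangle - \langle\Phi(Y),K\sigma(X)\rangle$ up to symmetrization; taking the skew part and recalling $\beta\in\Omega^2(M)$ is already skew gives exactly $\epsilon' = \epsilon + \tfrac12\beta|_W + \tfrac12\langle\Phi\wedge K\sigma\rangle_{\mathcal G_2}$. I would also double-check the consistency condition on $\Lambda^2\bar W$ analogously, but since $I$ is real this is automatic once the $W$-side holds and $L\oplus\bar L$ is preserved.

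Having established $\mathcal L_{E_2}\circ T_I = I_{\mathfrak S}\circ \mathcal L_{E_1}$, the bijectivity of $T_I$ is formal: its inverse is $T_{I^{-1}}$, where $I^{-1}$ is induced by $(K^{-1},-K^{-1}\Phi,-\beta + \text{(correction)})$ — more cleanly, one observes $T_I$ is, in the language of Proposition~\ref{action_on_Sd:prop}, the composition of the $K$-action on $\mathcal D$ and $W$ (a bundle isomorphism on the $\mathcal S_d$'s) with a translation-type map in $(\Phi,\beta)$ that is manifestly invertible because its $\sigma$- and $\epsilon$-components are affine and upper-triangular in the evident sense. Since $I$ maps real-index-zero involutive maximal isotropics to the same, $T_I$ does restrict to a map $\mathcal S(E_1)\to\mathcal S(E_2)$ as claimed (the integrability and real-index-zero conditions are intrinsic to $L$, hence transported by $I$). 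For part ii), with $E_1=E_2=E$, functoriality $I_{\mathfrak S}\circ J_{\mathfrak S} = (I\circ J)_{\mathfrak S}$ is immediate from the definition, and the identity $T_I\circ T_J = T_{I\circ J}$ then follows because both sides are lifts of $(I\circ J)_{\mathfrak S}$ through $\mathcal L_E$ and agree on the distinguished data: I would verify $T_I\circ T_J = T_{I\circ J}$ by composing the induced-data cocycle $(K_I K_J, K_I\Phi_J + \Phi_I, \beta_I + \beta_J + \text{(the }c\text{-type correction)})$, which is exactly the composition law for Courant algebroid isomorphisms recorded around \eqref{def-iso}–\eqref{def-cond}, and checking it is compatible with the formula for $T$. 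The main obstacle is purely computational bookkeeping in the $\epsilon$-component — keeping track of the skew-symmetrizations, the factor $\tfrac12$, and the pairing conventions for $\langle\cdot\wedge\cdot\rangle_{\mathcal G}$ versus $\langle\cdot,\cdot\rangle_{\mathcal G}$ — rather than any conceptual difficulty; once the $\epsilon$ term is pinned down correctly, everything else is structural.
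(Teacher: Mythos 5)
Your proposal is correct and follows essentially the same route as the paper: the paper declares part i) a straightforward verification (which you carry out explicitly) and proves part ii) via the composition law $(\tilde{K},\tilde\Phi,\tilde\beta)\cdot(K,\Phi,\beta)=(\tilde{K}K,\tilde{K}\Phi+\tilde\Phi,\beta+\tilde\beta-2(\tilde\Phi^{*}\tilde{K}\Phi)^{\mathrm{sk}})$ for the induced data, exactly as you propose (with inverse $(K^{-1},-K^{-1}\Phi,-\beta)$, no correction term needed since $(\tilde\Phi^{*}\tilde K\Phi)^{\mathrm{sk}}$ vanishes there). One bookkeeping slip: your displayed formulas for $I(u)$ and for its $(T^{*}M)^{\mathbb{C}}$-component omit the contribution $-2\Phi^{*}K\sigma(X)$, i.e.\ the term $-2\langle K\sigma(X),\Phi(Y)\rangle_{\mathcal G_{2}}$, which is precisely what makes the cross-term come out as the skew combination $\langle \Phi\wedge K\sigma\rangle_{\mathcal G_{2}}$ rather than its symmetric counterpart --- your final regrouping nevertheless records the correct skew expression, so the conclusion stands.
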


\begin{proof}  i) The first statement is straightforward. 

ii) In terms of  the components  $(K,\Phi,\beta)$ of  $I\in \mathrm{Aut}\,  (E)$, the group
multiplication in $\mathrm{Aut}\, (E)$ is given by
\begin{equation}\label{gplaw:q} (\tilde{K},\tilde\Phi ,\tilde \beta) \cdot (K,\Phi ,\beta) = (\tilde K K,\tilde K \Phi +\tilde \Phi, \beta +\tilde \beta -2(\tilde \Phi^*\tilde K\Phi )^{\mathrm{sk}}),\end{equation}
where $(\tilde \Phi^*\tilde K\Phi )^{\mathrm{sk}}$ denotes the skew part of the bilinear form $\tilde \Phi^*\tilde K\Phi =\langle K\Phi , \tilde \Phi\rangle$ on $W$
(see  \cite[Proposition 6]{cortes-david-JSG}). 
Using the above expression for the group law 
we can check that  the inversion in $\mathrm{Aut}\, (E)$  is given by
\[ (K,\Phi ,\beta) \mapsto (K^{-1},-K^{-1}\Phi, -\beta ).\] 
This implies that $T_{I^{-1}}$ is given by 
\begin{equation} 
(W, \mathcal D , \sigma ,  \epsilon )\mapsto (W,K^{-1} \mathcal D,  K^{-1}(\sigma -\Phi ), \epsilon -\frac12(\beta\vert_{W}  + \langle \Phi \wedge \sigma \rangle_{\mathfrak g})),
\end{equation}
where we used $\langle  K^{-1}\Phi \wedge K^{-1}\sigma \rangle_{\mathfrak g} = \langle \Phi \wedge \sigma \rangle_{\mathfrak g}.$ 
Applying $T_I$ to the result yields 
\[ (W, \mathcal D , \sigma , (\epsilon -\frac12(\beta\vert_{W} + \langle \Phi \wedge \sigma \rangle_{\mathfrak g}) 
+\frac12 (\beta\vert_{W} + \langle (\Phi \wedge K(K^{-1}(\sigma -\Phi) )\rangle_{\mathfrak{g}})) = (W,\mathcal D , \sigma, \epsilon).\]
This proves  that $T_I$ is a bijection and $(T_I)^{-1}= T_{I^{-1}}$. 
Using \eqref{gplaw:q} and the definition of $T_I$ it is easy to check that $I\mapsto T_I$ is a group 
homomorphism. 
\end{proof}

\begin{cor}\label{LI:cor}Let $I: E_{1} \rightarrow E_{2}$ 
be a Courant algebroid isomorphism between standard Courant algebroids and 
$I_{\mathfrak S} : {\mathfrak S} (E_{1}) \rightarrow {\mathfrak S}(E_{2})$ the induced map. 
Then any lift $L_{I} : \mathcal S (E_{1}) \rightarrow \mathcal S (E_{2})$ of $I_{\mathfrak S}$ is of the form $L_{I} = L_{I, \gamma}$, where  
$\gamma  \in \Gamma(\mathrm{Hom}(W,\mathcal D))$ and 
\begin{align} 
\nonumber&  L_{I,\gamma} (W, \mathcal D , \sigma ,  \epsilon ) :=\\
\nonumber&  \left( W,K \mathcal D, 
K(\sigma +\gamma ) + \Phi , \epsilon +\frac12\left(\beta\vert_{W} + \langle (K^{-1}\Phi -\gamma )\wedge \sigma \rangle
 +\langle K^{-1}\Phi \wedge \gamma\rangle\right) \right) .
\end{align}
Above $(K,\Phi,\beta)$ are the components of $I$, and, for simplicity,
$\langle \cdot , \cdot \rangle_{\mathcal G_{2}}$ was denoted by $\langle \cdot , \cdot \rangle$.
\end{cor}

\begin{proof}This follows from Propositions \ref{lifted_action:prop} and \ref{action_on_Sd:prop}.
\end{proof}
Let $f: M \rightarrow N$ be a diffeomorphism and $E$ a standard Courant algebroid over $N$, with quadratic Lie algebra bundle
$(\mathcal G , [\cdot , \cdot ]_{\mathcal G},  \langle \cdot , \cdot \rangle_{\mathcal G}).$ 
Then $f$ induces naturally a map between the sets of generalized complex structures on $E$ and  the pullback $f^{!}E$,  which assigns to 
a generalized complex structure $\mathcal J$ on $E$ the generalized complex structure $f^{!}\circ \mathcal J \circ (f^{!})^{-1}$.  Let $f_{\mathfrak S} :\mathfrak{S} (E) \rightarrow \mathfrak{S} ( f^{!}E)$ be the induced map on the sets of involutive maximal isotropic subbundles  of 
$E^{\mathbb{C}}$ and $(f^{!}E)^{\mathbb{C}}$ of real index zero. 
The next proposition is analogous to Proposition 
\ref{lifted_action:prop} and Corollary \ref{LI:cor} and can be checked easily. 

\begin{prop}\label{sigma0-isomorphism} i) Define the map $T_{f} : \mathcal S (E) \rightarrow \mathcal S (f^{!}E)$ by 
\begin{equation}
(W, \mathcal D , \sigma ,  \epsilon ) \mapsto  (  (df)^{-1}(W_f),   f^{*}\mathcal D ,f^{*}\sigma , f^{*}\epsilon )
\end{equation}
where $W_f= W \circ f$ and
\begin{align}
\nonumber& (f^{*} \sigma) (X_{x}) = \sigma ( df ( X_{x})) \in \mathcal G^{\mathbb{C}}_{f(x)} = (f^{*} \mathcal G^{\mathbb{C}} )_{x},\\\nonumber&  (f^{*} \mathcal D)_{x}:=
 \mathcal D_{ f(x)}\subset  \mathcal G^{\mathbb{C}}_{f(x)} = (f^{*} \mathcal G^{\mathbb{C}} )_{x}\\
\nonumber&  (f^{*}\epsilon )(X_{x}, Y_{x}) = \epsilon ( df (X_{x}), df(Y_{x})),
\end{align}
for any  $x\in M$,  $X_{x}, Y_{x}\in (df)^{-1}W_{f(x)}\subset T_{x}M$. Then $T_{f}$ is a bijection and satisfies 
${\mathcal L}_{f^{!} E} \circ T_{f} = f_{\mathfrak S} \circ \mathcal L_{E}$.\ 

ii) Any other lift of $f_{\mathfrak S}$ is of the form 
\begin{equation}
(W, \mathcal D , \sigma ,  \epsilon ) \mapsto  (  (df)^{-1}(W_f), f^{*}\mathcal D , 
f^{*}\sigma  +\gamma , f^{*}\epsilon + \frac{1}{2} \langle  f^{*}\sigma\wedge \gamma 
\rangle_{\mathcal G}), 
\end{equation}
where   $\gamma \in \Gamma (\mathrm{Hom} ( (df)^{-1} (W_{f}), f^{*} \mathcal D ).$\

iii)  When $M = N$ and $E$ is untwisted, the map $f\mapsto T_{f}$ is an action of $\mathrm{Diff} (M)$ on $\mathcal S (E)$
lifting the natural action of $\mathrm{Diff}(M)$ on 
$\mathfrak{S} (E)$.  
\end{prop}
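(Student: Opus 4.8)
The plan is to verify Proposition~\ref{sigma0-isomorphism} by a direct computation mirroring the one behind the canonical lift $T_I$ in Proposition~\ref{lifted_action:prop}, using that a pullback $f^!$ is as much an isomorphism of Courant algebroids as a fiber-preserving isomorphism $I$, only with the base map being a nontrivial diffeomorphism. First I would recall from Section~\ref{prelim-section} that for a diffeomorphism $f:M\to N$ the pullback $f^!:E\to f^!E$ sends $X_{f(x)}+\xi_{f(x)}+r_{f(x)}$ to $(d_xf)^{-1}(X_{f(x)})+\xi_{f(x)}\circ(d_xf)+r_{f(x)}$, so on the three summands it acts by $(d_xf)^{-1}$ on $TN$, by precomposition with $d_xf$ on $T^*N$, and by the identity on the pullback bundle $f^*\mathcal G$. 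Hence for any maximal isotropic $L\subset E^{\mathbb C}$, the subbundle $f_{\mathfrak S}(L):=f^!(L)\subset (f^!E)^{\mathbb C}$ has $\pi_{(TM)^{\mathbb C}}(f^!(L))=(df)^{-1}(W\circ f)$, its intersection with $\ker\pi_{(TM)^{\mathbb C}}$ projects onto $f^*\mathcal D$, and the defining covector relation \eqref{form-L} transforms exactly by pulling back $\sigma$ and $\epsilon$ along $df$; this establishes ${\mathcal L}_{f^!E}\circ T_f=f_{\mathfrak S}\circ\mathcal L_E$, i.e.\ $T_f$ is a genuine lift.

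For part i) it then remains to check that $T_f$ is a bijection, which is immediate since $f^{-1}$ induces the inverse map $T_{f^{-1}}$ (the assignment $f\mapsto T_f$ being functorial in $f$, as follows from the chain rule $d(g\circ f)=dg\circ df$), and to confirm that $T_f$ actually lands in $\mathcal S(f^!E)$, i.e.\ that the pulled-back data $((df)^{-1}(W\circ f),f^*\sigma,f^*\mathcal D,f^*\epsilon)$ again define an involutive maximal isotropic subbundle of real index zero. The maximal isotropic and real-index-zero properties are pointwise and are preserved because $f^!$ is a fiberwise isometry; involutivity of the $(1,0)$-bundle is preserved because $f^!$ intertwines the Dorfman brackets of $E$ and $f^!E$ (by the very construction of the pullback Courant algebroid, see \cite{poisson} and \cite[Lemma 27]{cortes-david-JSG}), so $\Gamma(L)$ closed under $[\cdot,\cdot]_E$ forces $\Gamma(f^!(L))$ closed under $[\cdot,\cdot]_{f^!E}$.

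For part ii), I would invoke Corollary~\ref{complement_sigma:eq} (in its complexified, bundle version, valid as noted at the start of Section~\ref{dirac-complex-section}): the fiber of $\mathcal L_{f^!E}$ over a given $L$ is an affine space modelled on $\mathrm{Hom}(\pi(L),\pi_{\mathcal G^{\mathbb C}}(\ker\pi|_L))$, and the affine action of $\mathrm{Hom}(\mathfrak W,\mathfrak D)$ described in Proposition~\ref{action_on_Sd:prop} is $T_\gamma(W,\sigma,\mathcal D,\epsilon)=(W,\sigma+\gamma,\mathcal D,\epsilon-\tfrac12\langle\gamma\wedge\sigma\rangle_{\mathcal G})$. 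Since any two lifts of $f_{\mathfrak S}$ differ, fiberwise over each $L\in\mathfrak S(f^!E)$, by an element of that structure group, composing $T_f$ with $T_\gamma$ for $\gamma\in\Gamma(\mathrm{Hom}(W,\mathcal D))$ and using that $f^*$ commutes with the pairing $\langle\cdot,\cdot\rangle_{\mathcal G}$ (because $f^!$ preserves scalar products) yields the stated formula $L_f(W,\sigma,\mathcal D,\epsilon)=((df)^{-1}(W\circ f),f^*(\sigma+\gamma),f^*\mathcal D,f^*(\epsilon-\tfrac12\langle\gamma\wedge\sigma\rangle_{\mathcal G}))$, and conversely every such $L_f$ is a lift. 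The only mildly delicate point — the ``main obstacle'', though it is routine rather than hard — is bookkeeping the way the defining covector condition in \eqref{form-L} transforms under $f^!$: one must carefully track that $\xi\circ d_xf$ evaluated on $(df)^{-1}Y$ reproduces $\xi(Y)$, so that no extra terms (of the $\beta$ or $\langle\Phi\wedge K\sigma\rangle$ type appearing in $T_I$) are generated — which is why, in contrast to the isomorphism case, the $\epsilon$-component of $T_f$ is simply $f^*\epsilon$ with no correction term.
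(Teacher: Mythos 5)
Your proof is correct and follows exactly the route the paper intends: the paper omits the argument, stating only that the proposition "is analogous to Proposition~\ref{lifted_action:prop} and Corollary~\ref{LI:cor} and can be checked easily," and your verification — computing how $f^{!}$ acts on the three summands, tracking the covector condition in \eqref{form-L} under precomposition with $df$, and reducing part ii) to the affine $\mathrm{Hom}(W,\mathcal D)$-action of Proposition~\ref{action_on_Sd:prop} — is precisely that check, carried out correctly.
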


\section{Proof of our main result}\label{classif-local}

Let  $\mathcal J$ be a generalized complex structure on  a transitive Courant algebroid $E$  over a manifold $M$ with $(1,0)$-bundle $L = L( W,\mathcal D ,  \sigma,  \epsilon  )$.

\begin{ass}{\rm   The fiber of the quadratic Lie algebra bundle of $E$ is a compact semisimple Lie algebra.}
\end{ass}

\begin{defn}\label{norm}   i) A point $x\in M$ is called {\cmssl regular} for $\mathcal J$ if the dimension of $W_x$ (or equivalently that of $\Delta_x :=(W_x\cap \bar W_x)_{\mathbb R}$, cf.\ Proposition \ref{prop-smooth})  is  constant in a neighbourhood of $x$.\

ii) The generalized complex structure  $\mathcal J$
is called {\cmssl regular}   if  for any $x\in M$ there is an open  neighbourhood $U$ of $x$ in $M$ and an isomorphism $I_{U}: E\vert_{U} \rightarrow E_{U}$ 
where $E_{U}: = TU\oplus T^{*}U\oplus  (U\times \mathfrak{g})$  is an untwisted Courant algebroid which maps $\mathcal D$ to a trivial bundle
$U\times {\mathcal D}_{U}$, where  ${\mathcal D}_{U}\subset \mathfrak{g}^{\mathbb{C}}$ is a regular subalgebra of $\mathfrak{g}^{\mathbb{C}}$
(that is, $\mathcal D_{U}$ is normalised by a Cartan subalgebra of $\mathfrak{g}$). 
\end{defn}

\begin{rem}\label{wang-rem}{\rm  When  $M$ is a point,  $E$  is a compact semisimple quadratic Lie algebra 
$(\mathfrak{g}, [\cdot , \cdot ]_{\mathfrak{g}}, 
\langle \cdot , \cdot \rangle_{\mathfrak{g}})$
and  $\mathcal J$  defines a  skew-symmetric left-invariant complex structure on  a Lie group  $G$ with Lie 
algebra 
$\mathfrak{g}$. From  \cite{wang}, $\mathcal J$  is regular. This motivates  the  regularity  assumption on  the fibers of
$\mathcal D .$ 
On the other hand, from Section \ref{standard-untwisted-section}  any transitive Courant algebroid is locally isomorphic to an untwisted Courant algebroid. 
The quadratic Lie algebra bundle of the latter is trivial and the Lie bracket and scalar product have constant coefficients in a 
constant trivialisation.  Therefore, it is also natural to assume that such  local isomorphisms map $\mathcal D$ to  trivial subbundles
of $M\times \mathfrak{g}^{\mathbb{C}}.$}
\end{rem}

The remaining part of this section is devoted to the proof of   Theorem \ref{main-thm}.   
Let $\mathcal J$ be a regular generalized complex structure on a transitive Courant algebroid $E$. We denote by 
$(\mathfrak{g},[\cdot , \cdot ]_{\mathfrak{g}}, \langle \cdot , \cdot \rangle_{\mathfrak{g}})$  the fiber-type 
of the quadratic Lie algebra bundle of $E$ and we assume that $\mathfrak{g}$ is a compact  semisimple Lie algebra.
Without loss of generality, we can (and will) assume that $E = TM\oplus T^{*}M\oplus  \mathcal G$ (where $\mathcal G := M\times \mathfrak{g}$)
is  untwisted and that $\mathcal D$ is a trivial subbundle of $\mathcal G^{\mathbb{C}}.$ 
As before, we denote by  
$\nabla$ the  canonical connection on $\mathcal G$.  We identify (and denote by the same symbols) 
subspaces  or  vectors from $\mathfrak{g}^{\mathbb{C}}$ with  the corresponding constant subbundles
or sections 
of $\mathcal G^{\mathbb{C}}.$ 
We will apply   successively  various (local) equivalences to    $\mathcal J$ in a neighborhood of a regular point.
From the regularity of  ${\mathcal D}$, 
\begin{equation}\label{k-d-redenoted}
\mathcal D =  \mathfrak{h}^{\mathcal D} +\mathfrak{g}^{\mathbb{C}}(R_{0})
\end{equation}
where   $R_{0}\subset R$ is a 
closed subset of the root system  $R$ 
of $\mathfrak{g}^{\mathbb{C}}$ 
relative to the complexification 
$\mathfrak{h} =( \mathfrak{h}_{\mathfrak{g}})^{\mathbb{C}}$  of  a Cartan subalgebra  $\mathfrak{h}_{\mathfrak{g}}$ of $\mathfrak{g}$ 
(that is, if $\alpha , \beta \in R_{0}$ are such that $\alpha +\beta \in R$, then $\alpha +\beta \in R_{0}$),
$$
\mathfrak{g}^{\mathbb{C}} (R_{0}) := \sum_{\alpha \in R_{0}} \mathfrak{g}_{\alpha}
$$
where $\mathfrak{g}_{\alpha}$ denotes the root space corresponding to the root $\alpha$ 
and $\mathfrak{h}^{\mathcal D} := \mathcal D \cap \mathfrak{h}.$ 
Relation   (\ref{k-d-redenoted})
follows  from the fact that $\mathfrak{h}$ is the complexification 
of a compact torus. It is also a special case of the more general 
statement (4.2) of \cite{wang}.

\begin{lem}\label{r0}  The set of roots $R_{0}$ is a positive root system  of $R$. 
\end{lem}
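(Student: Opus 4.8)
The plan is to reduce the statement to a fiberwise dimension count. Fix a point of $M$; by the reduction preceding the lemma we have $\mathcal D=\mathfrak h^{\mathcal D}+\mathfrak g^{\mathbb C}(R_0)$ as in \eqref{k-d-redenoted}, with $\mathfrak h^{\mathcal D}\subseteq\mathfrak h$, and we already know (since $\mathcal J$ is integrable, $\mathcal D$ is a Lagrangian subalgebra and hence $R_0$ is closed). So it suffices to prove that $R=R_0\sqcup(-R_0)$; together with the closedness of $R_0$, this is precisely the assertion that $R_0$ is a positive root system of $R$.

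First I would record the orthogonality relations forced by $\mathrm{ad}$-invariance of $\langle\cdot,\cdot\rangle_{\mathfrak g}$ with respect to the root space decomposition $\mathfrak g^{\mathbb C}=\mathfrak h\oplus\bigoplus_{\alpha\in R}\mathfrak g_\alpha$: for $\alpha\ne\beta$ one has $\mathfrak g_\alpha\perp\mathfrak g_\beta$ unless $\beta=-\alpha$, one has $\mathfrak h\perp\mathfrak g_\alpha$ for every $\alpha\in R$, and — since $\langle\cdot,\cdot\rangle_{\mathfrak g}$ is nondegenerate — both the pairing $\mathfrak g_\alpha\times\mathfrak g_{-\alpha}\to\mathbb C$ and the restriction of $\langle\cdot,\cdot\rangle_{\mathfrak g}$ to $\mathfrak h$ are nondegenerate (the latter because $\mathfrak h^{\perp}=\bigoplus_{\alpha\in R}\mathfrak g_\alpha$ inside $\mathfrak g^{\mathbb C}$). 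These follow immediately by applying invariance to a Cartan element on which $\alpha$, resp.\ $\alpha+\beta$, does not vanish, and they are essentially the relations already used in the proof of Lemma \ref{dec-iso}.

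From this I extract two inequalities. Since $\mathfrak g^{\mathbb C}(R_0)\subseteq\mathcal D$ is isotropic while $\mathfrak g_\alpha$ is nondegenerately paired with $\mathfrak g_{-\alpha}$, the roots $\alpha$ and $-\alpha$ cannot both lie in $R_0$; hence $R_0\cap(-R_0)=\emptyset$ and $|R_0|\le|R|/2$. Since $\mathfrak h^{\mathcal D}\subseteq\mathcal D$ is isotropic and is contained in $\mathfrak h$, on which $\langle\cdot,\cdot\rangle_{\mathfrak g}$ is nondegenerate of complex dimension $\ell:=\operatorname{rk}\mathfrak g$, we get $\dim_{\mathbb C}\mathfrak h^{\mathcal D}\le\ell/2$. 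On the other hand $\mathfrak h^{\mathcal D}\subseteq\mathfrak h$ and $\mathfrak g^{\mathbb C}(R_0)\subseteq\bigoplus_{\alpha}\mathfrak g_\alpha$ intersect only in $0$, so the sum defining $\mathcal D$ is direct and $\dim_{\mathbb C}\mathcal D=\dim_{\mathbb C}\mathfrak h^{\mathcal D}+|R_0|$. Since $\mathcal D$ is maximal isotropic in $\mathfrak g^{\mathbb C}$, which has even dimension $\dim_{\mathbb C}\mathfrak g^{\mathbb C}=\ell+|R|$ (neutral signature), we have $\dim_{\mathbb C}\mathcal D=\tfrac12(\ell+|R|)$, whence $\tfrac12(\ell+|R|)=\dim_{\mathbb C}\mathfrak h^{\mathcal D}+|R_0|\le\tfrac{\ell}{2}+\tfrac{|R|}{2}$. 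Equality must therefore hold throughout, so $|R_0|=|R|/2$, and combined with $R_0\cap(-R_0)=\emptyset$ this forces $R=R_0\sqcup(-R_0)$. As $R_0$ is also closed, it is a positive root system, by the standard characterization of positive systems as closed subsets $R_0\subseteq R$ with $R=R_0\sqcup(-R_0)$.

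I do not expect a genuine obstacle: the proof is a clean counting argument once the orthogonality structure of $\langle\cdot,\cdot\rangle_{\mathfrak g}$ relative to the root space decomposition is available. The only points needing a little care are verifying that the restriction of the form to $\mathfrak h$ is indeed nondegenerate — so that $\dim_{\mathbb C}\mathfrak h^{\mathcal D}\le\ell/2$ really holds — and pinning down the characterization of positive root systems that is being invoked at the end.
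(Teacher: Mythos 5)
Your proof is correct and follows essentially the same route as the paper: both reduce to the characterization of positive systems as closed subsets $R_{0}$ with $R_{0}\cap(-R_{0})=\emptyset$ and $R_{0}\cup(-R_{0})=R$, and both derive the first condition from the nondegenerate pairing $\mathfrak{g}_{\alpha}\times\mathfrak{g}_{-\alpha}\to\mathbb{C}$ together with the isotropy of $\mathcal D$. The only (harmless) difference is in the last step: where you obtain $R_{0}\cup(-R_{0})=R$ by a dimension count ($\dim\mathcal D=\dim\mathfrak{h}^{\mathcal D}+|R_{0}|\le \ell/2+|R|/2$ with equality forced by maximal isotropy), the paper argues by contradiction that for $\alpha\notin R_{0}\cup(-R_{0})$ the sum $\mathcal D+\mathfrak{g}_{\alpha}$ would be a strictly larger isotropic subspace — two equivalent uses of maximality.
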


\begin{proof}   Since  $R_{0}$ is closed, 
it remains  to show that  
$R_{0}^{\mathrm{sym}} := R_{0} \cap (- R_{0}) =\emptyset$ and 
$R_{0} \cup ( - R_{0}) = R$ ( \cite[Corollary 1, page 161]{bourbaki}). 
Since $\langle \cdot , \cdot \rangle_{\mathfrak{g} }$ is $\mathrm{ad}$-invariant and non-degenerate, any root vectors $E_\alpha\in \mathfrak{g}_{\alpha}\setminus \{ 0\}$ and root spaces $\mathfrak{g}_\alpha$ satisfy
\begin{align}
\nonumber& \langle E_{\alpha}, E_{-\alpha }\rangle_{\mathfrak{g} } \neq 0 ,\ \alpha ( [E_{\alpha}, E_{-\alpha}]_{\mathfrak{g} }) \neq 0,\ \forall \alpha \in R\\
\label{knapp-rel}& \langle \mathfrak{g}_{\alpha}, \mathfrak{g}_{\beta} \rangle_{\mathfrak{g} } =0,\ \forall \alpha , \beta \in R\cup \{ 0\} ,\ \alpha +\beta \neq 0,
\end{align} 
where $\mathfrak g_0 := \mathfrak h$ (see \cite[Proposition 2.17, pages 95, 97]{knapp}).  
Since  $\mathcal D$ isotropic,
the first relation in (\ref{knapp-rel}) 
implies  that $R_{0}^{\mathrm{sym}} = \emptyset .$ 
Assume, by contradiction, that  there is $\alpha \in R\setminus  (R_{0} \cup  (- R_{0})).$ 
Since $\alpha \notin R_{0}$, $\mathcal D$ is strictly included in $\mathcal D + \mathfrak{g}_{\alpha }.$ Since $\alpha \notin 
(- R_{0})$, the third relation in  (\ref{knapp-rel}) implies that $\mathcal D + \mathfrak{g}_{\alpha }$ is isotropic. 
But  $\mathcal D$ is maximal isotropic.  We obtain a contradiction.
\end{proof}

From relations  (\ref{k-d-redenoted}) and  (\ref{knapp-rel}),  $\mathfrak{h}^{\mathcal D}\subset \mathfrak{h}$ is maximal isotropic. 
Let 
\begin{equation}\label{triv-h}
\mathcal B ({\mathfrak{h}} ):= \{ v_{1}, \ldots , v_{p},  w_{1}, \ldots w_{q},
{v}^{\prime}_{1}, \ldots  , {v}^{\prime}_{p},  \tau (w_{1}), \ldots ,
\tau (w_{q}) \}
\end{equation}
be a  basis of  $\mathfrak{h}$  
adapted to  $\mathfrak{h}^{\mathcal D}$  (see Proposition \ref{max-iso-chevalley}), where $\tau : \mathfrak{g}^{\mathbb{C}} \rightarrow \mathfrak{g}^{\mathbb{C}}$
is the anti-involution determined by $\mathfrak{g}.$ 
Since $R_{0}= R^{+}$ is a positive root system, 
$$
\mathcal C = \mathfrak{h}^{\mathcal C} + \mathfrak{g}^{\mathbb{C}} (R^{-})
$$
is a   complement of $\mathcal D$  in
$\mathfrak{g}^{\mathbb{C}}$, where $R^{-} := - R^{+}$ and   
$\mathfrak{h}^{\mathcal C}$ is generated by   $\{ {v}^{\prime}_{i}, \tau (w_{j})\}$. 
From Corollary~\ref{complement_sigma:eq},   we can assume that $\sigma$ takes values in 
$\mathcal C$.
Condition C) from  Proposition \ref{conditii-integr}  implies that $\sigma (X)$ normalises 
$\mathcal D$  for any $X\in W$  
(since $\nabla$ is the canonical connection on $M\times \mathfrak{g}$ and 
$\mathcal D\subset M\times \mathfrak{g}^{\mathbb{C}}$ is trivial).
Then, a simple argument shows that 
$\sigma : W \rightarrow \mathfrak{h}^{\mathcal C} .$ We write
\begin{equation}\label{sigma-prel-reg}
\sigma(X) = \sum_{i=1}^{p} a_{i}(X) {v}^{\prime}_{i} + \sum_{j=1}^{q}  b_{j}(X) \tau (w_{j}),\ \forall X\in W,
\end{equation}
where $a_{i}, b_{j}\in \Gamma (W^{*}).$ 
Using that $\nabla {v}^{\prime}_{i} = \nabla  (\tau (w_{i}) )=0$, 
relations (\ref{r}) and (\ref{d-epsilon}) 
imply  that $a_{i}, b_{j}$  and $\epsilon$ are closed  forms. 
Restricting $M$ if necessary, 
we can find $f_{i}, g_{j} \in C^{\infty}(M, \mathbb{C})$ 
such that
$a_{i} = d f_{i}\vert_{W}$ and 
$b_{j} = d g_{j}\vert_{W}$.
To summarise our argument so far:

\begin{lem}\label{step-two} Any  regular generalized complex structure is  equivalent, in a neighborhood of a regular point, to a generalized complex structure 
$\mathcal J$ defined on an untwisted Courant algebroid $TM\oplus T^{*}M \oplus (M\times \mathfrak{g})$,   
with $(1,0)$-bundle $L = L(W, \mathcal D , \sigma  , \epsilon )$, where 
\begin{align}
\nonumber&\mathcal D = \mathrm{span}_{\mathbb{C}} \{ v_{1}, \ldots , v_{p}, w_{1} , \ldots ,  w_{q} \} + \mathfrak{g}^\mathbb{C} (R^{+}) \\
\label{w-d}&    \sigma = \sum_{i=1}^{p} (df_{i})\vert_{W}\otimes  {v}^{\prime}_{i} + \sum_{j=1}^{q} (d g_{j})\vert_{W} \otimes  \tau (w_{j}), 
\end{align}
where $f_{i}, g_{j}\in C^{\infty}(M, \mathbb{C})$ and 
\begin{equation}\label{sigma-e}
d\epsilon =0\quad \mathrm{on}\, \Lambda^{3}W.
\end{equation}
When $\Delta =  (W\cap \bar{W})_{\mathbb{R}}\neq \{ 0\}$,
the $1$-forms $\{ d  ( \mathrm{Im}\, f_{i} )\vert_{\Delta} \}$ are linearly independent and 
\begin{equation}
\omega:= \mathrm{Im}\,  \epsilon +  \frac{1}{2\sqrt{-1}}  \sum_{j=1}^{q} \epsilon_{j} dg_{j}\wedge d\bar{g}_{j}
\end{equation}
is non-degenerate on 
$$
\Delta_{0}:= \{ X\in \Delta \mid  d (\mathrm{Im} f_{i} )(X) =0,\ 1\leq i\leq p\} .
$$
When $\Delta =\{ 0\}$,
$q =\frac{1}{2} \mathrm{dim}\, (\mathfrak{h})$,
$\mathcal D\cap \bar{\mathcal D} =\{ 0\}$ and
$\sigma = \sum_{j=1}^{q} (dg_{j}) \vert_{W}\otimes \tau (w_{j})$. 
\end{lem}

\begin{proof} 
It remains to explain the linear independence of  $\{ d  ( \mathrm{Im}\, f_{i} )\vert_{\Delta} \}$ and the non-degeneracy of $\omega\vert_{\Delta_{0}}.$ 
For any $\alpha \in R$,  let  $E_{\alpha}\in \mathfrak{g}_{\alpha}\setminus \{ 0\}$ such that $\tau (E_{\alpha } ) =  - E_{-\alpha}$ 
 and $\langle E_{\alpha }, \tau (E_{\alpha }) \rangle_{\mathfrak{g}} =\epsilon_{\alpha}$, where $\epsilon_{\alpha}\in \{ \pm 1\}$.
Completing  the  basis 
$$
{v}_{1}, \ldots , {v}_{p},  w_{1}, \ldots  ,  w_{q},\, E_{\alpha}\  (\alpha \in R^{+})
$$
of $\mathcal D$ with 
\begin{equation}
{v}^{\prime}_{1}, \ldots , {v}^{\prime}_{p}, \tau (w_{1}), \ldots  , \tau (w_{q}),\, E_{\beta}\  (\beta \in R^{-})
\end{equation}
we obtain a  basis  of $\mathfrak{g}^{\mathbb{C}}$ adapted to $\mathcal D .$  
Using this basis,  we conclude  from 
Proposition \ref{basis}. 
\end{proof}

\begin{lem}\label{cor-unu}  Let $\mathcal J$ be a  generalized complex structure 
with $(1,0)$-bundle $L = L(W, \mathcal D , \sigma,  \epsilon)$
as in  Lemma  \ref{step-two}.

i) There is $I\in \mathrm{Aut}  (E)$ 
with $K$-component acting as the identity on $M\times \mathfrak{h}_{\mathfrak{g}}$ and such that 
the $(1,0)$-bundle of $I(\mathcal J)$ is given by 
$\tilde{L} = L( \tilde{W}, \tilde{ \mathcal D}, \tilde{\sigma},\tilde{\epsilon})$
where  
$\tilde{W} = W$, 
$\tilde{\mathcal D} = \mathcal D$  and
\begin{equation}\label{sigma}
\tilde{\sigma} (X) \equiv \sqrt{-1} \sum_{j=1}^{p} X(f_{j}) {v}^{\prime}_{j},\ \forall X\in W,
\end{equation}
where  $f_{j}\in C^{\infty}(M, \mathbb{R})$ and the symbol $\equiv$ denotes  equality modulo $\mathcal D .$\

ii)  In  a neighborhood of a regular point,  $\mathcal J$ is equivalent to a generalized complex structure as in Lemma \ref{step-two}, with $f_{i}$ ($1\leq i\leq p$) purely imaginary  and $g_{j} =0$ ($1\leq j\leq q$). 
For such a generalized complex structure  $(df_{i}\vert_{\Delta})$ are linearly independent and 
 $\mathrm{Im}\, \epsilon$ is non-degenerate on $\bigcap_{1\leq i\leq p} \ker (df_{i}\vert_{\Delta})$ when $\Delta\neq \{ 0\}.$
 \end{lem}

\begin{proof} i) Let   $K\in \mathrm{Aut} (M\times \mathfrak{g}  )$  be 
as in Lemma 
\ref{dec-iso}  ii) and  $I\in \mathrm{Aut} (E)$ which extends $K$ 
(according to  Lemma 
\ref{iso-untwisted}). Let  $\tilde{L} = L( \tilde{W}, \tilde{ \mathcal D},  \tilde{\sigma},\tilde{\epsilon})$ be the $(1,0)$-bundle of $I(\mathcal J )$, 
where $(\tilde{W}, \tilde{\mathcal D}, \tilde{\sigma}, \tilde{\epsilon }) = T_{I} (W, \mathcal D , \sigma , \epsilon )$.
Then
$\tilde{W} = W$, 
$\tilde{\mathcal D} = \mathcal D$  
and  we claim that 
\begin{equation}
\label{tilda-sigma0} \tilde{\sigma}  (X)= \sigma(X)  - \sqrt{-1} \sum_{j=1}^{\ell} X(\theta_{\alpha_{j}}) \tilde{H}_{\alpha_{j}},\ \forall X\in W.
\end{equation}
In the above relation  $\Pi = \{ \alpha_{1}, \ldots , \alpha_{\ell}\}$   are the simple roots from $R^{+}$
and  $\tilde{H}_{\alpha_{i}}\in \mathfrak{h}$ satisfy 
$\alpha_{j} ( \tilde{H}_{\alpha_{i}}) =\delta_{ij}$ for any $j$. In particular,    $\{ \sqrt{-1} \tilde{H}_{\alpha_{1}}, \ldots , \sqrt{-1} \tilde{H}_{\alpha_{\ell}}\}$ is a 
basis of $\mathfrak{h}_{\mathfrak{g} }.$\

In order to prove  (\ref{tilda-sigma0}), note 
that the component $\Phi\in \Omega^{1}(M, \mathfrak{g} )$ of  $I$  
satisfies 
\begin{equation}\label{phi-applied}
\langle \Phi (X), [ r,  s ]_{\mathfrak{g} } \rangle_{\mathfrak{g} }= \langle \nabla_{X} ( K^{-1} r), K^{-1} s \rangle_{\mathfrak{g} }
\end{equation} 
for any  $r,  s\in\mathfrak{g}^{\mathbb{C}} $
(from the first relation in (\ref{def-cond}) and the $\mathrm{ad}$-invariance of $\langle \cdot , \cdot \rangle_\mathfrak{g}$).
Relation  (\ref{phi-applied})  with  $r:= H\in \mathfrak{h}$ implies that
\begin{equation}\label{unu}
\Phi (X) \in \mathfrak{h}_{\mathfrak{g} },\  \forall X\in TM,
\end{equation}
where we used  $K\vert_{M\times \mathfrak{h}} = \mathrm{Id}$,  $H$  constant
and  the third relation in  (\ref{knapp-rel}). We choose again root vectors $(E_{\alpha})_{\alpha \in R}$ 
as in the proof of Lemma \ref{step-two}.
Relation (\ref{phi-applied})  with  $r:= E_{\alpha}$ and  $s:= E_{-\alpha}$  becomes
\begin{equation}
\langle \Phi (X), [ E_{\alpha}, E_{-\alpha} ]_{\mathfrak{g} } \rangle_{\mathfrak{g} } = -   \sqrt{-1}  X(\theta_{\alpha})
\langle E_{\alpha}, E_{-\alpha}\rangle_{\mathfrak{g} },
\end{equation}
for any $ \alpha \in R$, 
or
\begin{equation}\label{doi}
\alpha (  \Phi (X)) = -  \sqrt{-1}  X(\theta_{\alpha}),\  \forall \alpha \in R,
\end{equation}
where we  used that  $[ E_{\alpha}, E_{-\alpha} ]_{\mathfrak{g} } = \langle E_{\alpha}, E_{-\alpha }\rangle_{\mathfrak{g}} H_{\alpha}$
and  $\langle E_{\alpha}, E_{-\alpha}\rangle_{\mathfrak{g}} \neq 0$ 
(see  \cite[Lemma 2.18, page 95]{knapp}).
Here  $H_{\alpha}\in \mathfrak{h}$ corresponds to $\alpha\in \mathfrak{h}^{*}$ via the isomorphism 
$\mathfrak{h} \cong \mathfrak{h}^{*}$ 
defined by
$\langle \cdot , \cdot \rangle_{\mathfrak{g} }$. Relations (\ref{unu}) and (\ref{doi}) imply that
\begin{equation}\label{Phi-expr}
\Phi (X) = - \sqrt{-1}  \sum_{j=1}^{\ell} X(\theta_{\alpha_{j}}) \tilde{H}_{\alpha_{j}}. 
\end{equation}
Using 
$\tilde{\sigma} = K \sigma + \Phi$,    $\sigma (W) \subset M\times \mathfrak{h}$, $K\vert_{M\times \mathfrak{h} }=\mathrm{Id}$ and (\ref{Phi-expr}) 
we obtain   (\ref{tilda-sigma0}).\

Relation  (\ref{tilda-sigma0}), combined with Lemma \ref{dec-iso} ii) and Lemma \ref{iso-untwisted}, 
imply 
that  for any $s\in \Gamma (M\times \mathfrak{h}_{\mathfrak{g}})$ there is $I\in \mathrm{Aut}  (E)$, with 
the $\Phi$-component $\Phi (X) = \nabla_{X}s$  ($X\in W$) and 
the $K$-component
acting as the identity on $M\times \mathfrak{h}_{\mathfrak{g}}$. Its lift $T_{I}$ maps $(W, \mathcal D , \sigma , \epsilon )$ to $(W, \mathcal D , \tilde{\sigma}, \tilde{\epsilon })$ 
where 
$\tilde{\sigma} = \sigma +\nabla\vert_{W} s$.\
Letting 
$$
s= \sum_{i=1}^{p} \theta_{i} {v}^{\prime}_{i} +\sum_{j=1}^{q} \tilde{\theta}_{j} ( w_{j} +\tau (w_{j}) ) 
+ \sqrt{-1}\sum_{j=1}^{q}  \theta^{\prime}_{j}  ( w_{j} - \tau (w_{j})), 
$$
 where   $\theta_{i} := - \mathrm{Re}\, (f_{i})$, $\tilde{\theta}_{j} := - \mathrm{Re}\, (g_{j})$ and 
 $\theta^{\prime}_{j} := \mathrm{Im}\,  (g_{j})$ we obtain 
 $$
 \tilde{\sigma} (X) = \sqrt{-1}  \sum_{i=1}^{p} X(\mathrm{Im}\, (f_{i}) ) {v}^{\prime}_{i} - \sum_{j=1}^{q} X( \bar{g}_{j}) w_{j}.
 $$
 Since $w_{j}\in \mathcal D$,  relation (\ref{sigma}) follows 
 (with $\mathrm{Im}\, f_{j}$ replaced by $f_{j}$).\

ii) From Corollary  \ref{complement_sigma:eq}, 
we can add to $\tilde{\sigma}$ any $\mathcal D$-valued $1$-form on $W$
without changing $I(\mathcal J ).$ The first  claim follows. 
 For the second claim we use Lemma~\ref{step-two} with $g_{j}  =0$ and $f_{i}$ purely imaginary.
\end{proof}

From now on we assume that $\mathcal J$ is given by Lemma \ref{cor-unu} ii). Let $L = (W, \mathcal D , \sigma , \epsilon )$ be its $(1,0)$-bundle, with $\mathcal D$ and $\sigma$ as in Lemma \ref{cor-unu} ii). 
From $W +\bar{W} = (TM)^{\mathbb{C}}$ with $W$ and $\Delta$ of constant rank,  we can  (and will)  assume that 
$M = U\times V$, 
where $U\subset  \mathbb{R}^{n-2k}$ and $V\subset  \mathbb{C}^{k}$    are open neighborhoods of the origins  and
\begin{equation}\label{W}
W:= \mathrm{span}_{\mathbb{C}} \left\{ \frac{\partial}{\partial x^{i} },  \frac{\partial}{\partial \bar{z}^{j}} \right\} ,\
\Delta := \mathrm{span}_{\mathbb{R}}\left\{ \frac{\partial}{\partial x^{i} } \right\} , 
\end{equation}
where  $(x^{i})$ are  coordinates on  $\mathbb{R}^{n-2k}$ and   $(z^{j})$ are  complex coordinates on  $\mathbb{C}^{k}\cong \mathbb{R}^{2k}$.\

When   $n=2k$  (or $\Delta =\{ 0\}$), then   $M = V$,  $W= T^{0,1}V$,   
$(TM)^{\mathbb{C}} = W\oplus \bar W$, $\mathcal D \cap \bar{\mathcal D} = \{ 0\}$
and $\sigma =0$.
Extend  $\epsilon\in \Gamma (\Lambda^{2} W^{*})$ to a real closed $2$-form on $M$.  
Applying an automorphism of $E$ of the form
 \begin{equation}\label{I-final}
 I(X + \eta + r)  = X +  ( i_{X}\beta  +\eta ) + r,
\end{equation}
for any $X\in TM$, $\eta \in T^{*}M$, $r\in \mathfrak{g}$, where $\beta \in \Omega^{2}(M)$ is closed,
we  make $\epsilon =0$
(without modifying $\mathcal D$ and
$\sigma =0$).   
The claims on $\sigma$ and $\epsilon$ from Theorem \ref{main-thm} follow. 
When $n>2k$ we need two further steps 
in order to obtain $\sigma$ and $\epsilon$ as required by Theorem \ref{main-thm}. 

\begin{lem}\label{added-1}  Assume that $n> 2k.$ The generalized complex structure 
$\mathcal J$ is equivalent to  one   with $(1,0)$-bundle  $L(W, \mathcal D , \sigma , \epsilon )$ where $W$ is given by 
(\ref{W}), $\mathcal D$  by the first relation in (\ref{w-d}), 
$\sigma =  \sum_{i=1}^{p}dx^{i}\otimes ( \sqrt{-1} v_{i}^{\prime} )$  and
\begin{equation}
\epsilon =  \sqrt{-1} \omega_{\mathrm{st}}+ \alpha, 
\end{equation}
 where  $\omega_{\mathrm{st}}$ is the standard symplectic form on $\mathbb{R}^{n-2k-p}$ (identified with the slice
$x^{i} =0$, $z^{j}=0$ for any $1\leq i\leq p$ and $1\leq j\leq k$) and the (closed) form 
${\alpha} \in \Gamma (\Lambda^{2} W^{*})$ belongs to the ideal $I$ generated by $(dx^{i},\ 1\leq i\leq p).$ 
\end{lem}

\begin{proof} We apply to $\mathcal J$  local equivalences which preserve $W$, $\Delta$  and $\mathcal D$, and map
$\sigma$ and $\epsilon $  to the required forms. We can (and will)  assume that $f_{i} (0) =0$ for any $1\leq i\leq p.$ 
Since $\{ (df_{i}) \vert_{\Delta},\ 1\leq i\leq p\}$ are linearly independent and purely imaginary, 
we can find  $f_{p+1}, \ldots , f_{n-2k} \in C^{\infty}( U\times V,  \mathbb{R})$ with $f_{i} (0) =0$ (for any $p+1\leq i\leq n-2k$) such that 
$$
\chi : = ( -\sqrt{-1} f_{1}, \ldots , -\sqrt{-1}  f_{p}, f_{p+1}, \ldots , f_{n-2k} , \mathrm{pr}^{(2)} ) : U\times V \rightarrow \mathbb{R}^{n} = \mathbb{R}^{n-2k}\times \mathbb{R}^{2k}
$$
is a coordinate system of $M$ centred at the origin, 
where $\mathrm{pr}^{(2)} : U\times V \rightarrow V$ is the projection on the second factor.
Using the Courant algebroid pullback defined by $\chi^{-1}$, 
we can assume  from the very beginning that   
$f_{i}(x, z) = x^{i}\sqrt{-1} $ for any $1\leq i\leq p$
and that $W$ and $\Delta$ have the  same form 
(\ref{W}).   In particular, $\sigma$ is as required. 
Note   that $n-p$ is even, as  
$\Delta_{0}$ is spanned by the vector fields $\frac{\partial}{\partial x^{i}}$ 
($p+1\leq i\leq n-2k$) and 
supports a non-degenerate $2$-form
(see Proposition~\ref{basis}).\  

Let $U = (-\delta  , \delta )^{n-2k}$ and $V= (-\delta, \delta )^{2k}\subset \mathbb{R}^{2k}\cong \mathbb{C}^{k} $, where $\delta >0$. 
For $(x^1,\ldots ,x^p)\in  (-\delta, \delta )^p$ and $z\in (-\delta ,\delta)^{2k}$, let 
$\epsilon_{ (x^{1}, \ldots , x^{p}, z) }$ be the restriction of $\epsilon$ to the slice 
$\{ (x^{1}, \ldots   , x^{p}) \} \times  (-\delta , \delta )^{n-2k-p} \times \{ z\}$ and remark that 
$\mathrm{Im}\, (\epsilon_{ (x^{1}, \ldots , x^{p}, z) })\in \Omega^{2} ( (-\delta , \delta )^{n-2k-p})$ is a symplectic form
(from $d\epsilon =0$ and Lemma  \ref{cor-unu} ii)).
Making $\delta$ smaller if needed,  consider a local diffeomorphism
$$
f_{(x^{1}, \ldots  , x^{p}, z)} : (-{\delta}, {\delta})^{n-2k-p} \rightarrow \mathbb{R}^{n-2k-p}
$$
which preserves the  origin, depends smoothly on  $(x^{1}, \ldots  , x^{p}, z)\in (-\delta, \delta )^{p+ 2k}$ and  satisfies 
\begin{equation}
(f_{(x^{1}, \ldots  , x^{p}, z)} )^{*} (\omega_{\mathrm{st}}) = \mathrm{Im}\, \epsilon_{(x^{1}, \ldots , x^{p}, z)} .
\end{equation}
(see Corollary~\ref{darboux-family}). Using the Courant algebroid pullback  $G^{!} : E \rightarrow E$ where 
\begin{equation}\label{domain}
G :  ( -\delta , \delta )^{p}\times ( -{\delta}, {\delta})^{n-2k-p} \times ( -\delta ,  \delta )^{2k}
\rightarrow \mathbb{R}^{n-2k}\times \mathbb{R}^{2k}
\end{equation}
is defined by 
$$
G(x^{1}, \ldots , x^{n-2k}, z) = (x^{1}, \ldots , x^{p}, f_{(x^{1}, \ldots , x^{p}, z)} (x^{p+1}, \ldots , x^{n-2k}),z),
$$
we can  (and will)  assume that 
\begin{equation}\label{im-e}
\mathrm{Im}\, \epsilon_{(x^{1}, \ldots , x^{p}, z)}  = \omega_{\mathrm{st}},
\end{equation}
while  $W$, $\Delta$, $\mathcal D$  and $f_{i} (x, z) = x^{i}\sqrt{-1}$  ($1\leq i\leq p$) are as before.\

From   (\ref{im-e}) and 
Lemma  \ref{lemma-gualt} ii) applied 
to  the restriction 
$\epsilon_{(x^{1}, \ldots  , x^{p})}$ of $\epsilon $ 
to the slice $\{ (x^{1}, \ldots , x^{p})\} \times (- {\delta}, {\delta})^{n-2k-p}\times (-\delta ,\delta )^{2k}$
we obtain a real closed $2$-form 
$$
\beta_{(x^{1}, \ldots , x^{p})}\in \Omega^{2} ( (- {\delta}, {\delta})^{n-2k-p} \times ( -\delta, \delta )^{2k})
$$
depending smoothly on $(x^{1}, \ldots , x^{p}) \in (-\delta , \delta )^{p}$ such that 
\begin{equation}\label{epsilon-beta-omega}
\epsilon\vert_{W_{0}}  =  \beta\vert_{W_{0}} + \sqrt{-1} \omega_{\mathrm{st}},
\end{equation}
where the distribution $W_0\subset W$ is defined by 
 $$
 W_{0} := \mathrm{span}_{\mathbb{C}}\left\{ \frac{\partial}{\partial x^{i}}\ (p+1\leq i\leq n-2k),\ \frac{\partial}{\partial \bar{z}^{j}}\ (1\leq j\leq k )\right\}
 $$ 
 and  $\omega_{\mathrm{st}}$ is extended  trivially to 
 $W_{0}.$ 
 We  now apply  the Courant algebroid automorphism 
with components  $( K:= \mathrm{Id}, \Phi :=0,  -2\tilde{\beta} )$
(see
(\ref{def-iso})), 
where $\tilde{\beta}\in \Omega^{2}(U\times V)$ is  real, closed and  extends  the family $\beta_{(x^{1}, \ldots  , x^{p})}.$  
From  Corollary~\ref{LI:cor} with $\gamma =0$, 
we obtain a new generalized complex structure with $(1,0)$-bundle  $\tilde{L} = L(W, \mathcal D , \sigma , \epsilon -\tilde{\beta}\vert_{W})$. 
From (\ref{epsilon-beta-omega}), $\epsilon -\tilde{\beta}\vert_{W}$ and $\sqrt{-1} \omega_{\mathrm{st}}$  (extended trivially to $W$)   
differ by a  (closed) $2$-form ${\alpha} \in \Gamma ( \Lambda^{2} W^{*})$ which belongs to the ideal $I$ generated by $( dx^{i},\ 1\leq i\leq p)$.
 \end{proof}

The next lemma concludes the proof of Theorem \ref{main-thm}. 

\begin{lem}\label{added-2} Any generalized complex structure  $\mathcal J$ as in Lemma \ref{added-1} is equivalent to one as in
Theorem \ref{main-thm}.
\end{lem}

\begin{proof} 
 Note  that $\mathcal D$ given by the first relation in (\ref{w-d}) 
describes  the $(1,0)$-space of an invariant complex structure on  $G/T$ where $\mathrm{Lie} (G) = \mathfrak{g}$ and $\mathrm{Lie} (T)
= \mathrm{span}_{\mathbb{R}}\{ v_{1},\cdots , v_{p} \}$
(see \cite{wang}), as required by Theorem \ref{main-thm}. When $n=2k$, the claim follows from 
our considerations before Lemma \ref{added-1}. Assume now that $n>2k$ and let 
$(W, \mathcal D, \sigma , \epsilon )$ as in Lemma \ref{added-1}.  We write 
$\alpha = d \left(\sum_{i=1}^{p} \mathcal H_{i} dx^{i}\right)\vert_{W}$  where $\mathcal H_{i} \in C^{\infty} (M, \mathbb{C})$, 
and,  applying a Courant algebroid automorphism of the form (\ref{I-final}),  we make 
$\mathcal H_{i}$  purely imaginary:  $\mathcal H_{i} = \Omega_{i} \sqrt{-1}$, where $\Omega_{i} \in C^{\infty} (M, \mathbb{R}).$ 
Define $s\in \Gamma (M\times \mathfrak{h}_{\mathfrak{g}}^{\mathcal D})$ by $\langle s, v_{i}^{\prime}\rangle = - \Omega_{i}$, for any $i$, and let
$I\in \mathrm{Aut} (E)$ be an automorphism with   the $\Phi$-component  $\Phi (X) =\nabla_{X}s$ ($X\in W$)  
and the $K$-component acting as the identity on $M\times \mathfrak{h}_{\mathfrak{g}}$ 
(see the proof of Lemma \ref{cor-unu} i)).
Since $c^{\Phi} =0$ (as $\Phi (X)\in \mathcal D$ for any $X$ and $\mathcal D$ is Lagrangian), 
we can choose  the $\beta$-component of $I$ to be trivial.   
From Corollary \ref{LI:cor},  the lift $L_{I, \gamma }$  with $\gamma := - \Phi\vert_{W}$ 
maps $(W, \mathcal D , \sigma , \epsilon )$  to
$(W, \mathcal D, \tilde{\sigma}, \tilde{\epsilon })$ where $\tilde{\sigma}$ and $\tilde{\epsilon}$ have the form required by
Theorem \ref{main-thm}. 
In fact, 
\begin{eqnarray*} \tilde{\sigma}&=& \sigma + \gamma +\Phi|_W = \sigma = \sum_{i=1}^{p} dx^{i} \otimes (\sqrt{-1} v_{i}^{\prime}) \\
\tilde{\epsilon} &=& \epsilon + \langle \Phi|_W \wedge \sigma \rangle = \epsilon  - \sqrt{-1} \sum_{i=1}^{p} (d \Omega_{i} )\wedge dx^{i}
=   \sqrt{-1}  \omega_{\mathrm{st}}.
 \end{eqnarray*}
We conclude the proof by renaming   $(x^{p+1}, \cdots , x^{n-2k})$ as $(y^{1}, \cdots , y^{2q})$ (where $2q:= n-2k-p$).
\end{proof}

\subsection{Dependence of parameters in Theorem \ref{main-thm}}\label{dep-param-sect}

 We now prove Corollary \ref{dep-param-cor}.  
 Let   $\mathcal J$ and $\tilde{\mathcal J}$ be two regular generalised complex structures in normal form,
with parameters  $(p,q,k,  \mathcal D , v_{i}^{\prime})$ and $(\tilde{p}, \tilde{q} ,\tilde{k},  \tilde{\mathcal D} , \tilde{v}_{i}^{\prime})$
as in Theorem \ref{main-thm}, and assume that they are related by an equivalence, which we write as 
$I \circ f^{!}$, where $I\in \mathrm{Aut} (E)$ has components   $(\Phi , K, \beta )$
and $f\in \mathrm{Diff}(M)$. We  apply  Corollary \ref{LI:cor} and  Proposition \ref{sigma0-isomorphism}.
Since  $\tilde{W} =  (df)^{-1} (W_{f}) $, $\tilde{W} \cap \overline{\tilde{W}} =  (df)^{-1}(W_{f}\cap \bar{W}_{f})$ which implies that $\tilde{k}  = k.$
Since  $\tilde{\mathcal D} = K_{x_{0}} \mathcal D$   (for any $x_{0}\in M$),  $\tilde{\mathcal D}$ belongs to the orbit $[\mathcal D ]$ of  
$\mathcal D$ under the action of 
$\mathrm{Aut} (\mathfrak{g})$.
From 
$\tilde{\mathcal D} \cap \mathfrak{g}  =  K_{x_{0}}  (\mathcal D\cap \mathfrak{g})$ 
and $p = \mathrm{dim} (\mathcal D \cap \mathfrak{g)}$ (and similarly for $\tilde{p}$)
we deduce that $\tilde{p}  = p$ are determined by $[\mathcal D ].$  
It follows that also $\tilde{q} = q$. Choosing 
in  (\ref{triv-h})  another basis of $\mathfrak{h}$ adapted to $\mathfrak{h}^{\mathcal D}$ (in particular, other vectors 
${v}_{i}^{\prime}$), we obtain another normal form for a given regular generalized complex structure. In particular, the equivalence classes of
normal forms in Theorem~\ref{main-thm}  are independent of the choice of $v_{i}^{\prime}.$ 
If $\tilde{\mathcal D} = K_{0}\mathcal D$ where $K_{0} \in \mathrm{Aut} (\mathfrak{g})$ then 
$I\in \mathrm{Aut} (E)$ with components  of the form $(K  =K_{0}, \Phi = 0, \beta =0)$, 
maps a   regular generalized complex structure in normal form  with parameters   $(p, q, k, \mathcal D , v_{i}^{\prime})$ to 
one with parameters  $(p, q, k, \tilde{\mathcal D} , K_{0}v_{i}^{\prime})$. 
This concludes 
Corollary \ref{dep-param-cor}.

Finally, we point out that, due to the conjugacy of Cartan subalgebras and the transitivity 
of the Weyl group on positive root systems for a fixed  Cartan subalgebra, the orbit space of $\mathrm{Aut}(\mathfrak g)$ on the Grassmannian 
of Lagrangian subalgebras of $\mathfrak{g}^\mathbb{C}$ reduces to the orbit space of the subgroup $\mathrm{Aut}(\mathfrak g)_{\mathfrak{h}_\mathfrak{g},R^+}$
normalising a fixed Cartan subalgebra $\mathfrak{h}_\mathfrak{g}$ of $\mathfrak g$ and preserving a fixed set of positive roots $R^+$ on 
the Grassmannian of maximally isotropic subspaces of $\mathfrak{h}=\mathfrak{h}_\mathfrak{g}^\mathbb{C}$. The latter orbit space
of $\mathrm{Aut}(\mathfrak g)_{\mathfrak{h}_\mathfrak{g},R^+}$ coincides with the orbit space of the effective action by the 
group of symmetries of the Dynkin diagram of $\mathfrak g$ on the same Grassmannian.

\begin{appendix}

\section{Appendix}
\label{appendix-section}

\subsection{Families of automorphisms}\label{cartan-section}

Let  $(\mathfrak{g}, [\cdot , \cdot ]_{\mathfrak{g}}, \langle \cdot , \cdot \rangle_{\mathfrak{g}})$ be a  compact semisimple   quadratic Lie algebra and  
$\mathfrak{h}_{\mathfrak{g}}$ a Cartan subalgebra of $\mathfrak{g}$.   Let $R$ be the set of roots of $\mathfrak{g}^{\mathbb{C}}$
relative to $\mathfrak{h} := ( \mathfrak{h}_{\mathfrak{g}})^{\mathbb{C}}.$  

\begin{lem}\label{dec-iso}  i)  Let $K\in \mathrm{Aut}(M\times \mathfrak{g})$ 
which preserves $M\times \mathfrak{h}_{\mathfrak{g}}.$ 
Then (locally) $K = K_{1} K_{2}$ where $K_{i} \in \mathrm{Aut}\, (M\times \mathfrak{g})$ preserve $M\times \mathfrak{h}_{\mathfrak{g}}$,  
$K_{1}\vert_{M\times \mathfrak{h}_{\mathfrak{g}}}$ is the identity and $K_{2}$ is constant. In particular, $K\vert_{M\times \mathfrak{h}_{\mathfrak{g}}}$ is constant.\

ii) Any $K\in \mathrm{Aut} (M\times \mathfrak{g} )$ 
such that 
$K\vert_{M\times \mathfrak{h}_{\mathfrak{g}}} $  is the identity 
satisfies
\begin{equation}
K\vert_{M\times \mathfrak{g}_{\alpha }}  = e^{\theta_{\alpha}\sqrt{-1}} \mathrm{Id},\ \forall \alpha \in R, 
\end{equation}
where $\theta_{\alpha}\in C^{\infty}(M,  \mathbb{R})$, 
$\theta_{\alpha +\beta} = \theta_{\alpha} +\theta_{\beta}$ 
for all $\alpha, \beta  , \alpha +\beta \in R$ and $\theta_{-\alpha } = -\theta_{\alpha}$  for all  $\alpha \in R.$
The  $(\theta_{\alpha})_{\alpha \in R}$ are uniquely determined by their values on simple roots and the latter can be arbitrarily chosen.
\end{lem}

\begin{proof}  i)   We claim that the induced action 
$\hat{K}_{x} : R \rightarrow R$
defined by $\hat{K}_{x} (\alpha )_:= \alpha \circ (K_{x}\vert_{\mathfrak{h}})^{-1}$,  is (locally)  independent of  $x$. 
Denoting the  cardinality of $R$ by $|R|$ we write $R= \{ \alpha_{1}, \ldots , \alpha_{|R|}\}$. Let $H_{0} \in \mathfrak{h}$ be such that $\alpha_{i} (H_{0}) \neq \alpha_{j}(H_{0})$ for all
$i\neq j .$  Fix  $x_{0}\in M$ and $\alpha \in R$. Since  $\alpha \circ (K_{x_{0} }\vert_{\mathfrak{h}})^{-1}\in R$, there is $1\leq i\leq |R|$ such that
$\alpha \circ (K_{x_{0}}\vert_{\mathfrak{h}})^{-1} (H_{0}) =\alpha_{i}(H_{0}).$ We deduce that 
$\alpha \circ (K_{x_{0}}\vert_{\mathfrak{h}})^{-1} (H_{0}) \neq \alpha_{j}(H_{0})$ for all $j\neq i$ and the same holds 
with $x_{0}$ replaced by any $x\in U$ (a  small
neighbourhood of $x_{0}$).  Since $\alpha \circ (K_{x }\vert_{\mathfrak{h}})^{-1}\in R$,  we obtain that 
$\alpha \circ (K_{x }\vert_{\mathfrak{h}})^{-1} =\alpha_{i}$, or $\hat{K}_{x}(\alpha ) =\alpha_{i}.$ We deduce that
$\hat{K}_{x} : R \rightarrow R$ is independent of $x\in U$, as claimed.
We  now define 
$K_{2}\in \mathrm{Aut} (M\times \mathfrak{g})$ to be  the constant extension of $K_{x_{0}}\in \mathrm{Aut}(\mathfrak{g})$ and $K_{1} := K  K_{2}^{-1}.$
Then $(\hat{K}_{1})_{x} =  (\hat{K}_{1} )_{x_{0}} =\mathrm{Id}$ because $(K_{1})_{x_{0}} = \mathrm{Id}.$  It follows that $K_{1}\vert_{M\times \mathfrak{h}_{\mathfrak{g}}}= \mathrm{Id}$.
% is the identity. 

ii)  Since $K\vert_{M\times \mathfrak{h}_{\mathfrak{g}}}$
is the identity,    ${K}_{x} (\mathfrak{g}_{\alpha}  )=\mathfrak{g}_{\alpha} $ for any   $\alpha \in R$.  Thus,  
\begin{equation}\label{kappa}
K\vert_{M\times \mathfrak{h}_{\mathfrak{g} }} =\mathrm{Id},\ K\vert_{M\times \mathfrak{g}_{\alpha}} = h_{\alpha} \mathrm{Id},
\end{equation}
where $h_{\alpha}\in C^{\infty}(M, \mathbb{C})$,  for all $\alpha \in R.$ 
The relation
\begin{align}
\label{form-K}& K_{x}  ( [ E_{\alpha }, E_{\beta}  ]_{\mathfrak{g} } )= [ K_{x}(E_{\alpha}), K_{x}(E_{\beta} )]_{\mathfrak{g}}
%\\ & K_{x}( [H, E_{\alpha}]_{\mathfrak{g} } )= [ K_{x}(H), K_{x}(E_{\alpha})]_{\mathfrak{g} }
\end{align}
with 
$\alpha , \beta \in R$, together with (\ref{kappa}), 
implies that  
$h_{\alpha+\beta} = h_{\alpha } h_{\beta}$ for all $\alpha , \beta$, $\alpha +\beta \in R$.
Choosing root vectors $(E_{\alpha})_{\alpha\in R}$ such that $\tau (E_{\alpha})  = - E_{-\alpha}$,  we obtain that 
$K_{x} \circ \tau = \tau \circ K_{x}$ is equivalent to 
\begin{equation}\label{aa2}
h_{-\alpha} = \bar{h}_{\alpha},\ \forall \alpha \in R.
\end{equation}
Using  
relations (\ref{knapp-rel}) 
we obtain that 
 $\langle K_{x}r, K_{x}s\rangle_{\mathfrak{g} } = \langle r, s \rangle_{\mathfrak{g} }$ 
for all $r, s \in \mathfrak{g}$ is equivalent to
$h_{\alpha} h_{-\alpha} =1$ for all  $\alpha \in R.$ 
Combined with (\ref{aa2}), the latter relation implies  that  $| h_{\alpha} | =1$. Writing $h_{\alpha} 
= e^{\theta_{\alpha}\sqrt{-1}}$ we obtain our claim.
\end{proof}

\subsection{Families of $2$-forms}\label{form-section}

\begin{lem}\label{darboux-family}
Let $\{ \omega_{z}\}_{z\in V}\in \Omega^{2}(M)$ be a family of symplectic forms on a manifold $M$ of dimension $2n$, depending smoothly
 on  $z\in V$, where $V$ is an open neighborhood 
of the origin $0\in \mathbb{R}^{k}.$ Then, for any $p_{0}\in M$, there is an open  neighborhood $U$ of $p_{0}$, 
an open  neighborhood $V^{\prime}\subset V$ of  $0\in \mathbb{R}^{k}$ and 
diffeomorphisms  $\psi_{z} : U \rightarrow  U_{z}\subset  \mathbb{R}^{2n}$ depending smoothly on $z\in V^{\prime}$, 
such that $\psi_{z} (p_{0}) =0$ and $  \omega_{z}\vert_{U}=\psi_{z}^{*}(\omega_{\mathrm{st}}) $ for any $z\in V^{\prime}$, where
$\omega_{\mathrm{st}}$ is the standard symplectic form on $\mathbb{R}^{2n}.$ 
\end{lem}

\begin{proof} The claim follows by adapting the proof of the classical Darboux theorem in symplectic geometry (see e.g.\ \cite{duff})  to smooth families of symplectic forms.
\end{proof}

Let $M:= N\times \tilde{N}$,  where  
$N:=  ( -\delta  , \delta )^{p}\subset \mathbb{R}^{p}$, 
$\tilde{N}:=    (-\delta , \delta )^{2k}\subset \mathbb{R}^{2k}\cong (\mathbb{R}^2)^k=\mathbb{C}^{k}$ and 
$\delta >0$.
Consider the distributions on $M$ defined by
\begin{equation}
W:= \mathrm{span}_{\mathbb{C}} \left\{ \frac{\partial}{\partial x^{i} },  \frac{\partial}{\partial \bar{z}^{j}} \right\},\
\Delta := \mathrm{span}_{\mathbb{R}}\left\{ \frac{\partial}{\partial x^{i} } \right\}, 
\end{equation}
where  $(x^{i})$ are  coordinates on  $(-\delta  , \delta )^{p}\subset \mathbb{R}^{p}$ and 
$(z^{j})$ are  complex coordinates on  $ (-\delta  , \delta )^{2k}\subset \mathbb{C}^{k}$.

\begin{lem}\label{lemma-gualt}  
i) Let $\epsilon \in \Gamma (\Lambda^{2} W^{*})$ be a closed  $2$-form such that the functions
$\mathrm{Im}\,  \epsilon (  \frac{\partial}{\partial x^{i} },
 \frac{\partial}{\partial x^{j} })\in C^{\infty}(M, \mathbb{R})$ 
are  independent of $z$, for any $i, j.$  
Then, locally,  there is a  real closed $2$-form $\beta \in \Omega^{2}(M)$ such that 
\begin{equation}
\epsilon = ( \beta +\sqrt{-1}  \mathrm{Im}\,  (\epsilon \vert_{\Delta}))\vert_{W}.
 \end{equation}
Here $\beta$ is extended by complex linearity to $(TM)^{\mathbb{C}}$    and 
$ \mathrm{Im}\,  (\epsilon \vert_{\Delta})\in \Gamma (\Lambda^{2} \Delta^{*} )$  is 
extended by complex linearity to $\Delta^{\mathbb{C}}$ and then trivially to
$(TM)^{\mathbb{C}}$.\

ii) The above statement holds with parameters. More precisely, if $\epsilon = \epsilon_{(x^{p+1}, \ldots  , x^{n})}$ as above depends smoothly on some parameters 
$(x^{p+1}, \ldots , x^{n})$
then (making $\delta$ smaller if necessary), we can choose $\beta = \beta_{(x^{p+1}, \ldots , x^{n})}$ to depend smoothly on 
$(x^{p+1}, \ldots , x^{n})$  as well. 
\end{lem}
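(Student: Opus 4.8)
\textbf{Proof strategy for Lemma \ref{lemma-gualt}.}

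The plan is to reduce the statement to a standard Poincar\'e-lemma-with-parameters argument by analyzing the structure of $W$ relative to the product decomposition $M = N\times \tilde N$. Write $\epsilon$ in the local frame $\left\{ dx^i, d\bar z^j\right\}$ of $W^*$:
$$\epsilon = \tfrac12\sum_{i,j} \epsilon_{ij}\, dx^i\wedge dx^j + \sum_{i,j} \mu_{ij}\, dx^i\wedge d\bar z^j + \tfrac12\sum_{i,j}\nu_{ij}\, d\bar z^i\wedge d\bar z^j,$$
with smooth complex-valued coefficients on $M$. The hypothesis says $\operatorname{Im}\epsilon_{ij}$ is independent of the $z$-variables. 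The target form $\sqrt{-1}\operatorname{Im}(\epsilon|_\Delta)$ is $\tfrac{\sqrt{-1}}{2}\sum_{i,j}(\operatorname{Im}\epsilon_{ij})\,dx^i\wedge dx^j$, so after subtracting it the remaining problem is to show that $\eta := \epsilon - \sqrt{-1}\operatorname{Im}(\epsilon|_\Delta)|_W$ is the restriction to $W$ of a \emph{real} closed $2$-form $\beta$ on $M$. First I would observe that $\eta$, restricted to $\Delta^{\mathbb C}$, is real-valued (its $dx^i\wedge dx^j$-part now has coefficients $\operatorname{Re}\epsilon_{ij}$, which are real); this is the point of subtracting the imaginary part of $\epsilon|_\Delta$.

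The key step is to reinterpret the closedness condition $d^W\epsilon = 0$ (the exterior derivative of $\epsilon$ along the involutive distribution $W$, which is what ``$\epsilon\in\Gamma(\Lambda^2 W^*)$ closed'' should mean here, compare the use in Proposition~\ref{conditii-integr}~E)) as a system of PDEs for the coefficients, involving only the derivatives $\partial/\partial x^i$ and $\partial/\partial\bar z^j$. The strategy is then to build $\beta$ as a genuine $2$-form on $M$ by a fibered Poincar\'e lemma: treat the $\bar z$-directions as a $\bar\partial$-type complex and the $x$-directions as an ordinary de Rham complex, and integrate. Concretely, I would first use the $\bar z$-part of the closedness equations to write the $d\bar z^i\wedge d\bar z^j$ and $dx^i\wedge d\bar z^j$ components as $\bar\partial$ (and $d_x$) of suitable potentials, reducing $\eta$ modulo $d^W(\text{$1$-form})$ to a form supported on $\Lambda^2\Delta^*$ with coefficients $\operatorname{Re}\epsilon_{ij}$; then the residual $x$-only piece is a closed real $2$-form in the $x$-variables depending on $z$ as a parameter, to which the classical Poincar\'e lemma applies. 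Pulling back the potentials along the inclusion and adding them, one obtains a $1$-form $\alpha$ on $M$ (not just on $W$) and a real closed $2$-form $\beta$ on $N$ (extended to $M$) with $d\alpha|_W$ accounting for the off-diagonal and $d\bar z\wedge d\bar z$ parts; replacing $\beta$ by $\beta + d\alpha$ and checking reality finishes the construction. Real-valuedness of the final $\beta$ is arranged by taking real parts throughout, which is consistent precisely because $\eta|_\Delta$ was already real.

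For part ii), the parameter version, I would carry out exactly the same construction but note that every integration used (the $\bar\partial$-Poincar\'e lemma on a polydisc and the de Rham Poincar\'e lemma on a cube) is given by explicit integral formulas in the remaining variables, hence produces potentials depending smoothly on the extra parameters $(x^{p+1},\dots,x^n)$; shrinking $\delta$ if needed guarantees all the polydiscs/cubes on which we integrate are contained in the domain uniformly in the parameters. The main obstacle I anticipate is bookkeeping rather than conceptual: organizing the three families of coefficients $(\epsilon_{ij},\mu_{ij},\nu_{ij})$ and the mixed de Rham/$\bar\partial$ homotopy operators so that the potentials glue into a single globally defined $1$-form $\alpha$ on $M$ and so that the leftover is manifestly the pullback of a real closed form — and verifying that the hypothesis ``$\operatorname{Im}\epsilon_{ij}$ independent of $z$'' is exactly what is needed for the $x$-only residual piece to be both closed \emph{and} real after the reduction, with no obstruction coming from the $z$-dependence.
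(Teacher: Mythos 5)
Your overall strategy is the same as the paper's (which follows \cite[Section 4.7]{gualtieri-thesis}): extend $\epsilon$ to a complex $2$-form $A$ on all of $M$, split the exterior derivative into the three pieces $d_f$, $\partial$, $\bar\partial$ coming from the decomposition $\Lambda((TM)^{\mathbb C})^*=\Lambda((TN)^{\mathbb C})^*\otimes\Lambda(T^{(1,0)}\tilde N)^*\otimes\Lambda(T^{(0,1)}\tilde N)^*$, use Poincar\'e-type lemmas to integrate, and invoke the hypothesis that $\mathrm{Im}\,\epsilon_{ij}$ is $z$-independent to kill the obstructions; your treatment of part ii) (all homotopy operators are explicit integrals, hence smooth in parameters) is also the paper's. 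However, your sketch has a genuine gap at the step where reality of $\beta$ is enforced. The components of a form on $M$ of types $(1,1,0)$, $(0,2,0)$ and $(0,1,1)$ restrict to zero on $W$ (since $W$ contains $T^{(0,1)}\tilde N$ but not $T^{(1,0)}\tilde N$), so they are invisible to $\epsilon$; but once you fix $\beta^{200}=\tfrac12(A^{200}+\overline{A^{200}})$, $\beta^{101}=A^{101}$, $\beta^{002}=A^{002}$ by the requirement $\beta|_W=\eta$, reality \emph{forces} $\beta^{110}=\overline{A^{101}}$, $\beta^{020}=\overline{A^{002}}$ and leaves exactly one free real component $C=\beta^{011}$. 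Closedness of $\beta$ then couples $C$ to the conjugated components through the system
\begin{align}
\nonumber& \bar\partial C = -\partial(A^{002}),\\
\nonumber& d_f(C)=-\partial(A^{101})-\bar\partial(\overline{A^{101}}),
\end{align}
whose solvability is the heart of the proof: it requires the $\partial\bar\partial$-lemma together with the compatibility condition $\partial\bar\partial(\overline{A^{200}}-A^{200})=0$, i.e.\ a second use of the hypothesis. Your proposal never introduces this $(0,1,1)$-component and its solvability condition, and the substitute you offer does not work: ``taking real parts throughout'' changes the restriction to $W$ (so $\beta|_W=\eta$ is lost), and ``reducing modulo $d^W$ of a $1$-form'' is not legitimate either, because a leafwise-exact form $d^W\alpha$ need not be the restriction of a \emph{real closed} form on $M$ (the real closed form $d(\alpha+\bar\alpha)$ restricts to $(d\alpha)|_W+(\overline{d\alpha})|_W$, and the second summand is uncontrolled). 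Finally, the ``residual $x$-only piece'' with coefficients $\mathrm{Re}\,\epsilon_{ij}$ still depends on $z$ in general, so it is \emph{not} closed as a form on $M$, and the classical Poincar\'e lemma in the $x$-variables alone does not dispose of it; its $\partial$- and $\bar\partial$-derivatives feed back into the system above. These are precisely the points your ``bookkeeping'' defers, and they carry the actual content of the lemma.
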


 \begin{proof} 
 The proof consists in checking that the arguments of~\cite[Section 4.7]{gualtieri-thesis} can be done with smooth dependence on 
 parameters.
  \end{proof}
 
 \subsection{Isotropic subspaces}\label{basis-section}

Let $(V, g ) $ be a real vector space of even dimension $2n$ with  a non-degenerate symmetric bilinear form.
Let  $\tau : V^{\mathbb{C}}
\rightarrow V^{\mathbb{C}}$ be the conjugation of $V^{\mathbb{C}}$ determined by the real form $V$.

\begin{prop}\label{max-iso-chevalley}
Let $\mathcal D$ be a maximal isotropic subspace  of $(V^{\mathbb{C}}, g)$. There is a basis 
(called {\cmssl adapted to $\mathcal D$}) 
$$
B:= \{ v_{1}, \ldots , v_{p},  w_{1}, \ldots , w_{q}, 
{v}^{\prime}_{1}, \ldots , {v}^{\prime}_{p}, \tau (w_{1}), \ldots , \tau (w_{q}) \}
$$
of $V^{\mathbb{C}}$  ($p+q =n$), such that  $v_{a}, {v}^{\prime}_{b} \in V$ for any $a, b$, 
$$
\mathcal D =\mathrm{span}_{\mathbb{C}}\{ v_{1}, \ldots , v_{p}, w_{1}, \ldots , w_{q}\}
$$
and the only non-zero scalar products of vectors from  $B$ are 
\begin{equation}
g (v_{a}, {v}^{\prime}_{a} ) = 1,\ g (w_{c}, \tau (w_{c}) ) = \epsilon_{c} \in \{ \pm 1\}
\end{equation}
where $1\leq a\leq p$, $ 1\leq c\leq q$.
\end{prop}

\begin{proof}
Let  $\{ v_{1}, \cdots , v_{p} \} $ be a  basis of $N:= \mathcal D \cap V$ and $\{ {v}^{\prime}_{1}, \cdots  , {v}^{\prime}_{p}\}$ 
a system of linearly independent vectors 
which generate an isotropic subspace $P\subset V$, such that
$g( v_{a}, {v}^{\prime}_{b}) =\delta_{ab}$,
for any $1\leq a, b\leq p$ (see  \cite[I.3.2, page 77]{chevalley}). 
Then 
$V$ decomposes into  a direct sum of $N\oplus P$ and its orthogonal complement
$( N\oplus P)^{\perp}$ and $\mathcal D$ decomposes  into 
$N^{\mathbb{C}} \oplus \tilde{\mathcal D}$ and 
$\tilde{\mathcal D} := \mathcal D \cap ( ( N\oplus P)^{\perp})^{\mathbb{C}}$.
Moreover, 
$\tilde{\mathcal D}$  is a maximal isotropic  subspace of    the even dimensional vector space  $( (N\oplus P)^{\perp} )^{\mathbb{C} }$ and 
$\tilde{\mathcal D} \cap \tau ( \tilde{\mathcal D})  = \{ 0\}$. 
The vectors  $\{ w_{1}, \ldots , w_{q} \}$ form a basis of 
$\tilde{\mathcal D}$ and can be constructed inductively using linear algebra. 
\end{proof}

\end{appendix}

V. Cort\'es: vicente.cortes@math.uni-hamburg.de\

Department of Mathematics and Center for Mathematical Physics, University of Hamburg,  Bundesstrasse 55, D-20146, Hamburg, Germany.\\

L. David: liana.david@imar.ro\

Institute of Mathematics  `Simion Stoilow' of the Romanian Academy,   Calea Grivitei no.\ 21,  Sector 1, 010702, Bucharest, Romania.

\end{document}